\numberwithin{equation}{section}
\theoremstyle{definition} % 以下の環境で英文がイタリックにならないようにするため. 
\newtheorem{thm}{Theorem}[section]
\newtheorem{prop}[thm]{Proposition}
\newtheorem{lem}[thm]{Lemma}
\newtheorem{cor}[thm]{Corollary}
\newtheorem{dfn}[thm]{Definition}
\newtheorem{rem}[thm]{Remark}
\newcommand{\set}[1]{\{\,{#1}\,\}}
\newcommand{\Image}{\mathop{\rm Im}}
\newcommand{\R}{\mathbb R}
\newcommand{\Sn}{{S^n(1)}}
\newcommand{\Srn}{{S^n(\sqrt n)}}%S root n
\newcommand{\RPn}{{\mathbb RP^n}}
\newcommand{\mmsp}{mm-space}
\newcommand{\supp}{\mathop{\rm supp}}
\newcommand{\diam}{\mathop{\rm diam}}
\newcommand{\ObsDiam}{{\rm ObsDiam}}
\newcommand{\dx}{{d_X}}
\newcommand{\mux}{{\mu_X}}
\newcommand{\mmr}[1]{(\R,|\cdot|,{#1})}%mm-spとしての \mathbb R
\title{The maximum of the 1-measurement of a metric measure space}
\author{Hiroki Nakajima}
\date{\today}
\keywords{metric measure space, Lipschitz order, 1-measurement, isoperimetric inequality, observable diameter}
\subjclass[2010]{Primary 53C23; Secondary 53C20}
\begin{document}
\maketitle
\begin{abstract}
For a metric measure space, we treat the set of distributions of 1-Lipschitz functions, which is called the 1-measurement.
%ある\mmsp\ に対してその1-リプシッツ関数の分布全体の集合を考え，それを1-メジャーメントと呼ぶ．
On the 1-measurement, we have a partial order relation by the Lipschitz order introduced by Gromov \cite{Gmv:green}.
%1-メジャーメントにはGromovによるLipschitz orderと呼ばれる半順序が定められる．
The aim of this paper is to study the maximum and maximal elements of the 1-measurement with respect to the Lipschitz order.
%この論文ではLipchitz orderに関して1-メジャーメントの最大元や極大元を考察する．
We present a necessary condition of a metric measure space for the existence of the maximum of the 1-measurement. We also consider a metric measure space that has the maximum of its 1-measurement. 
%すなわち，測度距離空間にどのような条件があれば，その1-メジャーメントに最大元が存在するか，また，1-メジャーメントに最大元があれば，元の測度距離空間にどのような制約が課されるかについて述べる．
\end{abstract}
\tableofcontents
\section{Introduction}
In this paper, we study the maximum and the maximal elements of the 1-measurement of a metric measure space.
%今回の論文では，1-メジャーメントの最大元や極大元に関する結果を述べる．ただし，順序関係にはリプシッツ順序$\prec$と呼ばれる順序関係を用いる．
Let $(X,d_X)$ be a complete separable metric space with a Borel probability measure $\mux$. We call such a triple $(X,d_X,\mu_X)$ an {\it\mmsp}\ (metric measure space).
%完備可分な距離空間$(X,d_X)$がボレル確率測度$\mu_X$を備えているとき，三つ組$(X,d_X,\mu_X)$を\mmsp という．
Based on the measure concentration phenomenon, M. Gromov introduced various concepts and invariants in the \mmsp\ framework \cite{Gmv:green}.
%M. Gromovは測度の集中現象を考察し，\mmsp の枠組みにおいて，種々の概念や不変量を考案した\cite{Gmv:green}．
Observable diameter is one of the most important invariants defined by him. 
It is a quantity of how much the measure of an \mmsp\ concentrates and is defined by the 1-measurement. We assume that any \mmsp\ $X$ satisfies $X=\supp \mux$ unless otherwise stated, where $\supp\mux$ is the support of $\mux$.
%Gromovの定義した不変量の一つに1-メジャーメントがある．
%測度の集中現象において，\mmsp の集中の度合いを表すオブザーバブル直径と呼ばれる重要な不変量があるが，オブザーバブル直径は1-メジャーメントを用いて定義されている．
The {\it 1-measurement} of an \mmsp\ $X$ is defined as
%\mmsp $X$に対して，1-メジャーメント$\mathcal M(X;1)$は以下のように定義される．
\[
\mathcal M(X;1):=\set{f_*\mux\mid f:X\to\R :\text{1-Lipschitz function}},
\]
where  a 1-Lipschitz function is a Lipschitz continuous function with its Lipschitz constant less than or equal to one. The 1-measurement has a natural order relation called the Lipschitz order (Definition \ref{def:Lip_ord} and Remark \ref{rem:Lip_ord}). 
%実数空間$\R$上のボレル確率測度$\mu$に対し，その1-メジャーメント$\mathcal M((\R,|\cdot|,\mu);1)$の最大元は$\mu$であることが直ちに分かる．よって$X$が実数空間$\R$以外の場合を考察した．

We firstly treat the $n$-dimensional unit sphere $\Sn$ centered at the origin in $\R^{n+1}$ as an \mmsp. A compact Riemannian manifold is considered as an \mmsp\ with the Riemannian distance function and the normalized volume measure.
%まず，$n$次元単位球面$\Sn$に関して以下の結果が得られた．ただし，$\Sn$のようなリーマン多様体においては測度として体積測度を正規化したものを考える．
\begin{thm}[Gromov {\cite[\S 9]{Gmv:isop}}]\label{sn_max}
The push-forward $\xi_*\mu_\Sn$ of the measure $\mu_\Sn$ by the distance function $\xi$ from one point in $\Sn$ is the maximum of $\mathcal M(\Sn;1)$.
%$\mathcal M(\Sn;1)$の最大元は1点からの距離関数による押し出し測度である．
\end{thm}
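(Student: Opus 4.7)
The plan is to use L\'evy's spherical isoperimetric inequality to construct, for every $1$-Lipschitz function $f:\Sn\to\R$, a $1$-Lipschitz map $\phi:\R\to\R$ with $\phi\circ\xi$ equidistributed with $f$. This exhibits $f_*\mu_\Sn$ as a $1$-Lipschitz image of $\xi_*\mu_\Sn$, hence $f_*\mu_\Sn\preceq\xi_*\mu_\Sn$ in the Lipschitz order, which is exactly the maximum property of $\xi_*\mu_\Sn$ in $\mathcal M(\Sn;1)$.

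Set up the cumulative distribution functions
\[
F_f(a):=\mu_\Sn(\{f\le a\}),\qquad F_\xi(t):=\mu_\Sn(\{\xi\le t\}),
\]
and note that $F_\xi$ is continuous and strictly increasing on $[0,\pi]$, since the volume of a geodesic ball on $\Sn$ depends smoothly on the radius. Writing the generalized inverse $F_f^{-1}(s):=\inf\{a\in\R:F_f(a)\ge s\}$, I would define
\[
\phi:=F_f^{-1}\circ F_\xi.
\]
The identity $\phi_*(\xi_*\mu_\Sn)=f_*\mu_\Sn$ then follows from the standard quantile transformation argument, using atomlessness of $\xi_*\mu_\Sn$ (which rules out the pathologies where $F_f^{-1}$ fails to be a true inverse).

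The heart of the matter is showing that $\phi$ is $1$-Lipschitz. For $t\in\R$ and $\varepsilon>0$, set $a:=\phi(t)$, so that $F_f(a)\ge F_\xi(t)$. Since $f$ is $1$-Lipschitz, one has the elementary inclusion
\[
\{f\le a\}_\varepsilon\subseteq\{f\le a+\varepsilon\},
\]
where the subscript denotes the metric $\varepsilon$-neighborhood on $\Sn$. Choose $t'\ge t$ with $\mu_\Sn(\{\xi\le t'\})=F_f(a)$; the spherical geodesic ball $\{\xi\le t'\}$ has the same measure as $\{f\le a\}$, and its $\varepsilon$-neighborhood is precisely $\{\xi\le t'+\varepsilon\}$. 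L\'evy's isoperimetric inequality on $\Sn$ then gives
\[
\mu_\Sn(\{f\le a+\varepsilon\})\ge\mu_\Sn(\{f\le a\}_\varepsilon)\ge\mu_\Sn(\{\xi\le t'+\varepsilon\})\ge F_\xi(t+\varepsilon),
\]
which forces $\phi(t+\varepsilon)\le a+\varepsilon=\phi(t)+\varepsilon$, hence the $1$-Lipschitz bound.

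The main obstacle is the invocation of L\'evy's isoperimetric inequality on the sphere; everything else is routine bookkeeping with quantile functions and CDFs. A secondary technical point is the handling of possibly non-strictly-monotone $F_f$, which is absorbed into the use of the generalized inverse together with the continuity of $F_\xi$; I would verify carefully that the comparison $F_f(a)\ge F_\xi(t)$ at $a=\phi(t)$ is genuine, using right-continuity of $F_f$.
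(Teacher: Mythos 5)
Your proposal is correct and follows essentially the same route as the paper: both define the transfer map as the generalized inverse (quantile function) of the CDF of $f_*\mu_\Sn$ composed with the CDF of $\xi_*\mu_\Sn$, verify the push-forward identity via the continuity and strict monotonicity of the radial CDF, and obtain the $1$-Lipschitz bound by applying L\'evy's isoperimetric inequality to the sublevel set $\{f\le a\}$ compared with the geodesic ball of equal measure. The only differences are bookkeeping (the paper isolates the quantile-function facts in Lemmas \ref{tilde_bdd}--\ref{meas_pres}, while you invoke the standard quantile transformation argument directly).
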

We give a detailed proof of Theorem \ref{sn_max} in Section \ref{sec:sn_max}.
%定理\ref{sn_max}は\ref{sec:sn_max}節で証明を述べる．
We use L{\'e}vy's isoperimetric inequality (Theorem \ref{thm:levy_ineq}) in the proof of Theorem \ref{sn_max}.
%定理\ref{sn_max}の証明にはL{\'e}vyの等周不等式(定理\ref{thm:levy_ineq})を用いた
As a corollary of Theorem \ref{sn_max}, we see the normal law {\`a} la L{\'e}vy (Corollary \ref{normal_law}). This theorem can be thought as a finite-dimensional version of the normal law {\`a} la L{\'e}vy.
%また，もう一つの系として，normal law {\`a} la L{\'e}vyが得られる(系\ref{normal_law})．すなわち，今回の定理はnormal law {\`a} la L{\'e}vyを有限次元に拡張したものとも考えられる．

We obtain the following result for a general \mmsp. Denote the diameter of $X$ by $\diam X$.
%また，一般の\mmsp において以下の結果が得られた．
\begin{prop}\label{diam_maximal}
Let $(X,\dx,\mux)$ be an \mmsp. Any measure $\mu\in\mathcal M(X;1)$ satisfying $\diam\supp\mu=\diam X<\infty$ is a maximal element of the 1-measurement $\mathcal M(X;1)$.
%測度$\mu\in\mathcal M(X;1)$が$\diam\supp\mu=\diam\supp X<\infty$を満たすとき，$\mu$は$\mathcal M(X;1)$の極大元である．
\end{prop}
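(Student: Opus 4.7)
The plan is to use the definition of the Lipschitz order on $\mathcal{M}(X;1)$: $\mu\prec\nu$ means there exists a 1-Lipschitz map $\Phi\colon\R\to\R$ with $\Phi_*\nu=\mu$. I will start from an arbitrary $\nu\in\mathcal{M}(X;1)$ with $\mu\prec\nu$ and deduce $\nu\prec\mu$, which gives maximality. The underlying picture is that $\mu$ already saturates the ``diameter budget'' available in $\mathcal{M}(X;1)$, forcing any dominating $\nu$ to be related to $\mu$ by a global isometry of $\R$.

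First I would record the a priori bound $\diam\supp\rho\le\diam X$ for every $\rho\in\mathcal{M}(X;1)$: writing $\rho=h_*\mux$ with $h\colon X\to\R$ 1-Lipschitz, one has $\supp\rho\subset\overline{h(X)}$ and $\diam\overline{h(X)}\le\diam X$. Combined with the inequality $\diam\supp\mu\le\diam\supp\nu$ (a consequence of $\supp\mu\subset\overline{\Phi(\supp\nu)}$ together with the 1-Lipschitz property of $\Phi$), the hypothesis $\diam\supp\mu=\diam X$ forces $\diam\supp\nu=\diam\supp\mu=\diam X$. Setting $a:=\inf\supp\nu$, $b:=\sup\supp\nu$, $a':=\inf\supp\mu$, $b':=\sup\supp\mu$, we have $a,b\in\supp\nu$ and $a',b'\in\supp\mu$ with $b-a=b'-a'=\diam X$, and a short continuity argument applied to $\overline{\Phi(\supp\nu)}\supset\supp\mu$ pins down $\{\Phi(a),\Phi(b)\}=\{a',b'\}$.

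It follows that $|\Phi(b)-\Phi(a)|=b-a$, and the standard rigidity of 1-Lipschitz functions of a real variable forces $\Phi$ to be affine of slope $\pm 1$ on the interval $[a,b]$; in particular $\Phi$ restricts to an isometry on $\supp\nu\subset[a,b]$. Extending its inverse to a global affine isometry $\Psi\colon\R\to\R$ (which is still 1-Lipschitz), one has $\Psi\circ\Phi=\mathrm{id}$ on $\supp\nu$, so $\Psi_*\mu=\Psi_*\Phi_*\nu=\nu$, giving $\nu\prec\mu$. I expect the only delicate point to be this rigidity step identifying $\Phi|_{\supp\nu}$ with an isometry; everything else is routine bookkeeping with supports and 1-Lipschitz push-forwards.
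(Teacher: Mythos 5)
Your proposal is correct and follows essentially the same route as the paper: force $\diam\supp\nu=\diam\supp\mu=\diam X$ by the 1-Lipschitz/diameter monotonicity, then use the rigidity of a 1-Lipschitz map attaining the full diameter to see it is an isometry (slope $\pm1$) on the support. The only cosmetic difference is that the paper (Lemma \ref{diam_iso}) concludes directly that $\mmr\mu$ and $\mmr\nu$ are mm-isomorphic, whereas you invert by a global isometry to get $\nu\prec\mu$ and invoke antisymmetry of the Lipschitz order.
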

Proposition \ref{diam_maximal} is simple and powerful to find a maximal element of the 1-measurement $\mathcal M(X;1)$.
%命題\ref{diam_maximal}は，$\mathcal M(X;1)$の極大元を求めるにあたって，簡明かつ強力なものである．
As a corollary of Proposition \ref{diam_maximal}, we have the following.
%命題\ref{diam_maximal}の系として以下が分かる．
\begin{cor}\label{xi_max}
Let an \mmsp\ $X$ satisfy $\diam X<\infty$ and a point $x_0\in X$ satisfy $\sup_{x\in X}\dx(x,x_0)=\diam X$. 
The push-forward $\xi_*\mux$ of $\mux$ by the distance function $\xi$ from the point $x_0$ is a maximal element of the 1-measurement $\mathcal M(X;1)$.
%$\diam\supp X<\infty$とする．このとき，$\sup_{x\in X}\dx(x,x_0)=\diam\supp X$を満たす点$x_0$からの距離関数$\xi$による押し出し測度$\xi_*\mux$は$\mathcal M(X;1)$の極大元となる．
In particular, if the maximum of the 1-measurement $\mathcal M(X;1)$ exists, then it is $\xi_*\mux$.
%特に$\mathcal M(X;1)$の最大元が存在すればそれは$\xi_*\mux$である．
\end{cor}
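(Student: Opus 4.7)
The plan is to derive maximality of $\xi_*\mux$ as a direct application of Proposition \ref{diam_maximal}, and then to obtain the ``in particular'' statement from the order-theoretic definitions of maximum and maximal element.

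First I verify the two hypotheses of Proposition \ref{diam_maximal} for $\mu:=\xi_*\mux$. The function $\xi(x)=\dx(x,x_0)$ is 1-Lipschitz by the triangle inequality, so $\xi_*\mux\in\mathcal M(X;1)$ by definition. The containment $\xi(X)\subseteq[0,\diam X]$ yields the upper bound $\diam\supp\xi_*\mux\leq\diam X$. For the matching lower bound I would show that both endpoints touch the support: since $x_0\in X=\supp\mux$, any open neighbourhood of $x_0$ has positive $\mux$-measure and maps under $\xi$ into a neighbourhood of $0$, so $0\in\supp\xi_*\mux$; and using the hypothesis $\sup_{x\in X}\dx(x,x_0)=\diam X$, for every $\varepsilon>0$ one selects $x_\varepsilon\in X$ with $\xi(x_\varepsilon)>\diam X-\varepsilon/2$ and notes that the open ball $B(x_\varepsilon,\varepsilon/2)$ has positive $\mux$-measure and $\xi$-image inside $(\diam X-\varepsilon,\diam X]$. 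Consequently $\xi_*\mux\bigl((\diam X-\varepsilon,\diam X]\bigr)>0$, and the minimality of $\supp\xi_*\mux$ among closed sets of full measure forces it to meet this interval. Combining with $0\in\supp\xi_*\mux$ gives $\diam\supp\xi_*\mux>\diam X-\varepsilon$ for all $\varepsilon>0$, hence equality. Proposition \ref{diam_maximal} now delivers the maximality assertion since $\diam X<\infty$.

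For the ``in particular'' statement, suppose the maximum $\mu^*$ of $\mathcal M(X;1)$ exists. Then $\xi_*\mux\prec\mu^*$ by definition of a maximum. But $\xi_*\mux$ is maximal by the first part, so no element can strictly dominate it, and antisymmetry of the Lipschitz order on $\mathcal M(X;1)$ forces $\mu^*=\xi_*\mux$.

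The one delicate step is the support computation in the second paragraph: one must exploit the stronger assumption that $\diam X$ is actually realised as $\sup_{x\in X}\dx(x,x_0)$ (not merely that $\diam X<\infty$) in order to push points of $\supp\xi_*\mux$ arbitrarily close to the endpoint $\diam X$. Everything else is a direct invocation of Proposition \ref{diam_maximal} and of the definitions of maximum versus maximal element.
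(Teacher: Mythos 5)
Your proposal is correct and follows the same route the paper intends: verify via the hypothesis $\sup_{x\in X}\dx(x,x_0)=\diam X$ (together with $X=\supp\mux$) that $\diam\supp\xi_*\mux=\diam X$, invoke Proposition \ref{diam_maximal} for maximality, and deduce the ``in particular'' part from antisymmetry of the Lipschitz order. The support computation you flag as delicate is exactly the detail the paper leaves implicit, and your treatment of it is sound.
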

In the case where two points $x_0,x_1\in X$ satisfy $\sup_{x\in X}\dx(x,x_i)=\diam X,\ i=0,1$, each push-forward $(\xi_i)_*\mux$ of $\mux$ by the distance function $\xi_i$ from the point $x_i$ is a maximal element of the 1-measurement $\mathcal M(X;1)$.
%二点$x_0,x_1$が$\sup_{x\in X}\dx(x,x_i)=\diam\supp X$を満たす場合，系\ref{xi_max}から点$x_i$からの距離関数$\xi_i$による押し出し測度$(\xi_i)_*\mux$はそれぞれ$\mathcal M(X;1)$の極大元になる
Therefore, if $(\xi_0)_*\mux$ and $(\xi_1)_*\mux$ are not isomorphic to each other, then the 1-measurement $\mathcal M(X;1)$ has no maximum because it has two different maximal elements.
%が，$(\xi_0)_*\mux$と$(\xi_1)_*\mux$が同型でない場合$\mathcal M(X;1)$は極大元を二つ持つことになってしまうため，最大元を持たないことが分かる．
On the other hand, the push-forward by the distance function from one point does not depend on how to pick the point in a homogeneous space such as the flat torus $T^n\,(n\geq 2)$ or the projective space $\RPn\,(n\geq 2)$.
%一方，フラットトーラス$T^n\,(n\geq 2)$や$n$次元実射影空間$\RPn\,(n\geq 2)$などの等質空間において一点からの距離関数による押し出し測度は点の取り方によらないが，
However, $\mathcal M(T^n;1)$ and $\mathcal M(\RPn;1)$ both have no maximum because of one of main theorems stated as follows.
%以下の定理\ref{max_threePt}によって，1-メジャーメント$\mathcal M(T^n;1),\mathcal M(\RPn;1)$は最大元を持たないことが分かる．
\begin{thm}\label{max_threePt}
Assume that the 1-measurement $\mathcal M(X;1)$ has its maximum.
%1-メジャーメント$\mathcal M(X;1)$が最大元を持つと仮定する．
Then, for any two points $x,y\in X$ with $\dx(x,y)=\diam X$\\$<\infty$, we have
\[
\dx(x,z)+\dx(z,y)=\diam X \quad\text{for any point $z\in X$}.
\]
%任意の二点$x,y\in X$で$\dx(x,y)=\diam\supp X<\infty$を満たすものをとるとき，任意の点$z\in X$に対して$\dx(x,z)+\dx(z,y)=\diam\supp X$が成り立つ．
\end{thm}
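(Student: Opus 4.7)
The plan is to construct a one-parameter family of $1$-Lipschitz functions $f_\lambda\colon X\to\R$ interpolating between $\dx(x,\cdot)$ and $\diam X-\dx(y,\cdot)$, observe via Proposition \ref{diam_maximal} that each pushforward $(f_\lambda)_*\mux$ is a maximal element of $\mathcal{M}(X;1)$ and is therefore forced to equal the assumed maximum $\mu^*$, and then extract the desired pointwise identity from the resulting equality of distributions.

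More concretely, for $\lambda\in[0,1]$ set
\[
f_\lambda(z):=\lambda\,\dx(x,z)+(1-\lambda)\bigl(\diam X-\dx(y,z)\bigr).
\]
As a convex combination of $1$-Lipschitz functions, $f_\lambda$ is itself $1$-Lipschitz, and direct evaluation gives $f_\lambda(x)=0$ and $f_\lambda(y)=\diam X$. Since $x,y\in\supp\mux=X$, both endpoints $0$ and $\diam X$ lie in $\supp(f_\lambda)_*\mux$; combined with the $1$-Lipschitz bound $\diam\supp(f_\lambda)_*\mux\leq\diam X$, this gives $\diam\supp(f_\lambda)_*\mux=\diam X$. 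Proposition \ref{diam_maximal} then makes each $(f_\lambda)_*\mux$ a maximal element of $\mathcal{M}(X;1)$, and the same reasoning used in Corollary \ref{xi_max} forces $(f_\lambda)_*\mux=\mu^*$ for every $\lambda\in[0,1]$.

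In particular, the two endpoint functions $f_0=\diam X-\dx(y,\cdot)$ and $f_1=\dx(x,\cdot)$ share the common distribution $\mu^*$ under $\mux$. Their pointwise difference
\[
f_1(z)-f_0(z)=\dx(x,z)+\dx(y,z)-\diam X
\]
is nonnegative by the triangle inequality and integrates to zero against $\mux$ since $f_0$ and $f_1$ have equal means. Hence $f_1=f_0$ holds $\mux$-almost everywhere; continuity of $\dx$ together with $\supp\mux=X$ then upgrades this to equality everywhere on $X$, giving $\dx(x,z)+\dx(z,y)=\diam X$ for every $z\in X$. The main conceptual point, and the only real obstacle, is discovering the interpolating family $f_\lambda$; once it is in hand, the remaining verifications are essentially mechanical.
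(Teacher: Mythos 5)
There is a real gap at the step ``Proposition \ref{diam_maximal} \dots forces $(f_\lambda)_*\mux=\mu^*$ for every $\lambda$.'' The Lipschitz order on $\mathcal M(X;1)$ is an order on mm-isomorphism \emph{classes} (Remark \ref{rem:Lip_ord}), so maximality of $(f_\lambda)_*\mux$ together with $(f_\lambda)_*\mux\prec\mu^*$ only yields that $(f_\lambda)_*\mux$ is mm-isomorphic to $\mu^*$, not equal to it as a measure on $\R$. An mm-isomorphism between measures on $\R$ is induced by $t\mapsto \pm t+c$, and since both $\supp(f_0)_*\mux$ and $\supp(f_1)_*\mux$ have minimum $0$ and maximum $\diam X$, what you actually get is: either $(f_1)_*\mux=(f_0)_*\mux$, or $(f_1)_*\mux=(R-\cdot)_*(f_0)_*\mux$ with $R=\diam X$. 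In the reflected case your conclusion fails: it says only that $\dx(x,\cdot)$ and $\dx(y,\cdot)$ are equidistributed, so $\int(f_1-f_0)\,d\mux=2\int\dx(x,\cdot)\,d\mux-\diam X$, which has no reason to vanish, and the ``nonnegative function with zero integral'' argument collapses. This is precisely the point the paper is careful about: in its (contrapositive) proof it constructs two maximal elements $\xi_*\mux$ and $\zeta_*\mux$ and must rule out \emph{both} identifications, $\xi_*\mux=\zeta_*\mux$ and $(R-\xi)_*\mux=\zeta_*\mux$, before it can conclude they are not mm-isomorphic.

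Your interpolation idea can be salvaged, but it needs an extra argument that you did not supply. For instance: each $(f_\lambda)_*\mux$ is mm-isomorphic to $\mu^*$ and supported in $[0,R]$ with $0,R$ in its support, so it equals one of the two fixed measures $\sigma$ or $(R-\cdot)_*\sigma$; hence $\lambda\mapsto\int f_\lambda\,d\mux$, which is affine in $\lambda$, takes at most two values and is therefore constant, giving $\int f_0\,d\mux=\int f_1\,d\mux$ after all (alternatively, weak continuity of $\lambda\mapsto(f_\lambda)_*\mux$ plus connectedness of $[0,1]$ rules out a jump between $\sigma$ and its reflection). With that inserted, the rest of your argument (nonnegativity of $f_1-f_0$, zero integral, continuity and $\supp\mux=X$) is fine, and it would give a direct proof rather different in flavor from the paper's, which instead builds a second, non-radial $1$-Lipschitz function $\zeta$ and compares measures of explicit balls to exhibit two non-isomorphic maximal elements. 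As written, however, the proposal conflates mm-isomorphism with equality of distributions, and that is a genuine missing step.
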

We prove Theorem \ref{max_threePt} in Section \ref{sec:max_threePt}. 
%定理\ref{max_threePt}は\ref{sec:max_threePt}節で証明する．
Theorem \ref{max_threePt} is widely applicable not only for Riemannian manifolds but also for discrete spaces.
%定理\ref{max_threePt}はリーマン多様体だけでなく離散的な空間にも適用できる点で適用範囲が非常に広い．

In the case where $X$ is a compact Riemannian homogeneous space, by using Theorem \ref{max_threePt}, we see that the cut locus of every point consists of a single point if $\mathcal M(X;1)$ has its maximum. Such a Riemannian manifold is called a Wiedersehen manifold and is known to be isometric to a round sphere $S^n(r)$ of radius $r>0$ \cite{Yang:odd}. Therefore, the following corollary follows.
%$X$がコンパクトなリーマン等質空間であるときは定理\ref{max_threePt}より$\mathcal M(X;1)$に最大元が存在すれば任意の点に対してcut locusが一点集合になることが分かる．このようなリーマン多様体はWiedersehen多様体と呼ばれ，それは単位球面$\Sn$にスケーリングを除いて等長同型である．よって以下の系が従う．

\begin{cor}\label{homog_max}
Let $X$ be a compact Riemannian homogeneous space. Then, the 1-measurement $\mathcal M(X;1)$ has its maximum if and only if $X$ is isometric to a round sphere $S^n(r),\ r>0$.
%$X$をコンパクトなリーマン等質空間とする．このとき，$\mathcal M(X;1)$が最大元を持つことと$X$が単位球面$\Sn$にスケーリングを除いて等長同型であることは同値である．
\end{cor}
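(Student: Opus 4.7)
The plan is to handle the two implications separately. The ``if'' direction is immediate: if $X$ is isometric to $S^n(r)$, then $\mathcal M(\Sn;1)$ has a maximum by Theorem \ref{sn_max}, and rescaling the metric by the factor $r$ transports this maximum to $\mathcal M(S^n(r);1)$ (the rescaling acts on push-forward measures as a linear change of variable on $\R$, which preserves the Lipschitz order). The substantive half is the converse, which I now outline.

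Suppose $X$ is a compact Riemannian homogeneous space and $\mathcal M(X;1)$ has a maximum. Compactness gives $\diam X<\infty$ and ensures the diameter is attained, and homogeneity upgrades this to the statement that for every $x\in X$ there exists $y\in X$ with $\dx(x,y)=\diam X$. Applying Theorem \ref{max_threePt} to any such pair $(x,y)$ yields
\[
\dx(x,z)+\dx(z,y)=\diam X \qquad \text{for every } z\in X.
\]
I would first use this three-point equality to show uniqueness of the farthest point from $x$: if $y_1,y_2$ both realize the diameter from $x$, then $\dx(z,y_1)=\diam X-\dx(x,z)=\dx(z,y_2)$ for every $z$, and setting $z=y_1$ forces $y_1=y_2$. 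Write $y(x)$ for this unique farthest point.

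Next I would show that the cut locus $\mathrm{Cut}(x)$ equals $\{y(x)\}$. By the three-point equality, the concatenation of a minimizing segment from $x$ to any $z$ with one from $z$ to $y(x)$ has length exactly $\dx(x,y(x))=\diam X$, so the concatenation is itself a minimizing geodesic and therefore a smooth geodesic arc. For $z\ne y(x)$ the point $z$ sits strictly inside this minimizing segment and is therefore not a cut point of $x$, while $y(x)$ must be a cut point since no minimizing geodesic from $x$ can be extended past a point of maximal distance. Hence the cut locus of every point of $X$ consists of a single point, so $X$ is a Wiedersehen manifold, and by \cite{Yang:odd} is isometric to a round sphere $S^n(r)$ for some $r>0$.

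The main obstacle is the cut-locus identification: one must translate the three-point \emph{distance} equality into the existence of a genuine minimizing \emph{geodesic} through $z$, and then argue that such an interior $z$ is not a cut point of $x$. With this step in hand the remaining content is routine, and the deep geometric work is packaged into Yang's classification of Wiedersehen manifolds.
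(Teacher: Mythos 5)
Your proposal is correct and follows essentially the same route as the paper: homogeneity plus Theorem \ref{max_threePt} forces the cut locus of every point to consist of a single point, so $X$ is a Wiedersehen manifold and Yang's theorem \cite{Yang:odd} gives the round sphere, while the ``if'' direction is the rescaled Theorem \ref{sn_max}. Your filling-in of the cut-locus step (the concatenated segments form a minimizing geodesic, interior points of minimizing geodesics are not cut points, and the unique farthest point is a cut point) is exactly the argument the paper sketches.
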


%In the three dimensional projective space $\rpt$, the isoperimetric inequality is obtained by Antonio Ros \cite{Ros:iso}. 
%$\rpt$においては，Antonio Rosによって等周不等式が知られている\cite{Ros:iso}．
%Let $\xi$ be the distance function from one point in $\rpt$ and $\eta$ be the distance function from a maximal geodesic line. 
%一点からの距離関数を$\xi$，極大な測地線からの距離関数$\eta$とする．
%Level sets of $\xi$ and $\eta$ are a metric ball and a torus respectively. Those subset are isoperimetric hypersurfaces. We have the following inequality. 
%このとき，それぞれの関数の等位面は距離球面とトーラスであり，これは\rptにおける等周不等式を実現する曲面である．

%等周不等式を参考にすることで以下の不等式が得られた．
%\begin{thm}\label{thm:diam_rpt}
%For any real number $\kappa\in(0,1)$, we have
%任意の実数$\kappa\in(0,1)$において
%\[
%\diam(\xi_*\mu_\rpt;1-\kappa)<\diam(\eta_*\mu_\rpt;1-\kappa).
%\]
%が成り立つ．
%\end{thm}
%We prove Theorem \ref{thm:diam_rpt} in Chapter \ref{chap:diam_rpt}.
%定理\ref{thm:diam_rpt}は第\ref{chap:diam_rpt}章にて証明する．
\section{Preliminaries}\label{preliminaries}
In this section, we enumerate some basics of \mmsp\ and prepare for describing the maximum and maximal elements of the 1-measurement.
%この章では，\mmsp の基本事項を述べ，1-メジャーメントの最大元や極大元について述べる準備をする．
We refer to \cite{Gmv:green,Shioya:mmg} for more details about this section.
%この章の内容について詳しくは\cite{Gmv:green,Shioya:mmg}を参照されたい．
\subsection{Some basics of \mmsp}\label{basics}
%\subsection{\mmsp の基本事項}\label{basics}
\begin{dfn}[\mmsp]
Let $(X,\dx)$ be a complete separable metric space with a Borel probability measure $\mux$. 
%$(X,d_X)$を完備可分な距離空間とし，$\mu_X$を$X$上のボレル確率測度とする．
We call such a triple $(X,\dx,\mux)$ an {\it\mmsp}. 
%このとき，三つ組$(X,d_X,\mu_X)$を{\it \mmsp }と呼ぶ．
We sometimes say that $X$ is an \mmsp, for which the metric and measure of $X$ are respectively indicated by $\dx$ and $\mux$.
%紛れのないときは$d_X$と$\mu_X$を省略し，単に\mmsp $X$と呼ぶ．
\end{dfn}

We denote the Borel $\sigma$-algebra over $X$ by $\mathcal B_X$.   
%\mmsp $X$のボレル集合族を$\mathcal B_X$と書く．
For any point $x\in X$, any two subsets $A,B\subset X$ and any real number $r>0$, we define
%また，点$x\in X$と部分集合$A,B\subset X$と実数$r\in\mathbb R$に対し，
\begin{align*}
d_X(x,A)&:=\inf_{y\in A} d_X(x,y),\\
d_X(A,B)&:=\inf_{x\in A,\,y\in B}d_X(x,y),\\
U_r(A)&:=\set{y\in X\mid d_X(y,A)<r},\\
B_r(A)&:=\set{y\in X\mid d_X(y,A)\leq r}.
\end{align*}
%と定義する．

Let $p:X\to Y$ be a measurable map from a measure space $(X,\mu)$ to a topological space $Y$. {\it The push-forward of $\mu$ by the map $p$} is defined as $p_*\mu(A):=\mu(p^{-1}(A))$ for any $A\in \mathcal B_X$. 
%\begin{dfn}[押し出し測度]
%$p:X\to Y$を測度空間$(X,\mu)$から位相空間$Y$への可測写像とする．このとき，ボレル測度$p_*\mu$を以下で定義する．任意のボレル集合$A\subset Y$に対して
%\[
%p_*\mu(A):=\mu(p^{-1}(A)).
%\]
%このとき$p_*\mu$を写像$p$による$\mu$の{\it 押し出し測度}と呼ぶ．
%\end{dfn}

%\begin{dfn}[測度のサポート]
%測度空間$(X,\mu)$において$X$に位相が定まっているとする．このとき，$\mu$のサポート$\supp\mu$は以下で定義される．
%\[
%\supp\mu:=\set{x\in X\mid \text{任意の$x$の開近傍$U$に対して$\mu(U)>0$}}.
%\]
%\end{dfn}

\begin{dfn}[mm-isomorphism]
%\begin{dfn}[mm-同型]
Two \mmsp s $X$ and $Y$ are said to be {\it mm-isomorphic} to each other if there exists an isometry $f:\supp\mu_X\to\supp\mu_Y$ such that $f_*\mu_X=\mu_Y$, where $\supp\mux$ is the {\it support of $\mux$}.
%2つの\mmsp $X$と$Y$が{\it mm-同型}であるとは，等長写像$f:\supp\mu_X\to\supp\mu_Y$が存在し，$f_*\mu_X=\mu_Y$をみたすことである．
Such an isometry $f$ is called an {\it mm-isomorphism}.
%このとき，等長写像$f$をmm-同型写像という．
The mm-isomorphism relation is an equivalence relation on the set of mm-spaces.
%mm-同型は\mmsp の間の同値関係になっている．
Denote by $\mathcal X$ the set of mm-isomorphism classes of \mmsp s.
%\mmsp 全体の族のmm-同型に関しての商集合を$\mathcal X$とかく．
\end{dfn}

Note that $X$ is mm-isomorphic to $(\supp\mux,\dx,\mux)$. We assume that any \mmsp\ $X$ satisfies \[X=\supp \mux\] unless otherwise stated.

%\begin{dfn}[1-リプシッツ写像]
%$(X,d_X),(Y,d_Y)$を距離空間とする．
%写像$f:X\to Y$が{\it 1-リプシッツ写像}であるとは任意の$x,y\in X$に対して
%\[
%d_Y(f(x),f(y))\leq d_X(x,y)\
%\]
%が成り立つことである．
%\end{dfn}

\begin{dfn}[$1$-measurement]
%\begin{dfn}[$1$-メジャーメント]
The {\it $1$-measurement} $\mathcal M(X;1)$ of an \mmsp\ $X$ is defined as
%\mmsp $X$に対して$1$-{\it メジャーメント} $\mathcal M(X;1)$は以下のように定義される．
\[
%\mathcal M(X;1) := \{f_\ast\mu\mid f:X\to \mathbb R:\text{1-リプシッツ関数}\}.
\mathcal M(X;1) := \{f_\ast\mu\mid f:X\to \mathbb R:\text{1-Lipschitz function}\}.
\]
\end{dfn}

\begin{dfn}[Lipschitz order]\label{def:Lip_ord}
%\begin{dfn}[リプシッツ順序]
Let $X$ and $Y$ be two \mmsp s.
%$X$と$Y$を\mmsp とする．
We say that $X$ {\it dominates} $Y$ and write $Y\prec X$ if there exists a 1-Lipschitz map $f:X\to Y$ satisfying
\[
f_*\mu_X=\mu_Y.
\]
%$X$が$Y$を{\it 支配する}とは，ある1-リプシッツ写像$f:\supp X\to \supp Y$が存在し，
%\[
%f_*\mu_X=\mu_Y
%\]
%を満たすことをいう．これを$Y\prec X$と書く．
We call the relation $\prec$ on $\mathcal X$ the {\it Lipschitz order}.
%関係$\prec$を{\it リプシッツ順序}という．
\end{dfn}
\begin{prop}
The Lipschitz order $\prec$ is a partial order relation on $\mathcal X$.
%リプシッツ順序は$\mathcal X$における半順序関係である．
\end{prop}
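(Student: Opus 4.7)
The plan is to verify reflexivity, transitivity, and antisymmetry of $\prec$ on $\mathcal X$. Reflexivity $X \prec X$ is witnessed by $\mathrm{id}_X$, which is 1-Lipschitz and pushes $\mu_X$ to itself. For transitivity, given 1-Lipschitz $f \colon X \to Y$ and $g \colon Y \to Z$ with $f_* \mu_X = \mu_Y$ and $g_* \mu_Y = \mu_Z$, the composition $g \circ f \colon X \to Z$ is 1-Lipschitz and $(g \circ f)_* \mu_X = g_*(f_* \mu_X) = \mu_Z$, so $Z \prec X$.

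Antisymmetry is the substantive step. Suppose 1-Lipschitz measure-preserving maps $f \colon X \to Y$ and $g \colon Y \to X$ witness $Y \prec X$ and $X \prec Y$. The plan is to show $f$ itself is an mm-isomorphism via the following intermediate claim: \emph{any 1-Lipschitz self-map $h$ of an \mmsp\ (recall the standing convention $X = \supp \mu_X$) satisfying $h_* \mu_X = \mu_X$ is an isometry}. Granting the claim and applying it to $h := g \circ f$ and to $f \circ g$, the chain
\[
d_X(x, x') = d_X(h(x), h(x')) \leq d_Y(f(x), f(x')) \leq d_X(x, x')
\]
forces $f$ to be distance-preserving. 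Its image $f(X)$ is dense in $Y$, since $f_* \mu_X = \mu_Y$ and every nonempty open subset of $Y = \supp \mu_Y$ has positive measure; it is also closed, being the isometric image of the complete space $X$. Hence $f$ is an isometric bijection with $f_* \mu_X = \mu_Y$, that is, an mm-isomorphism.

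The intermediate claim is the crux, and I would prove it by contradiction. Assume $d_X(h(a), h(b)) < d_X(a, b)$ for some $a, b \in X$ and set $\delta := d_X(a, b) - d_X(h(a), h(b)) > 0$. Continuity of $h$ lets me choose $r > 0$ with $2r < \delta/3$ and $d_X(h(a'), h(b')) < d_X(h(a), h(b)) + \delta/3$ whenever $a' \in B_r(a)$ and $b' \in B_r(b)$. I then apply the Poincar{\'e} recurrence theorem to the measure-preserving transformation $(x, y) \mapsto (h(x), h(y))$ on the probability space $(X \times X, \mu_X \times \mu_X)$: since $a, b \in \supp \mu_X$, the product ball $B_r(a) \times B_r(b)$ has positive measure, yielding $(a', b')$ in it and an integer $N \geq 1$ with $d_X(h^N(a'), a') < \delta/6$ and $d_X(h^N(b'), b') < \delta/6$. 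The iterated 1-Lipschitz estimate $d_X(h^N(a'), h^N(b')) \leq d_X(h(a'), h(b'))$, combined with two applications of the triangle inequality, then produces $d_X(a, b) < d_X(h(a), h(b)) + \delta = d_X(a, b)$, a contradiction. The main obstacle is setting up this recurrence-plus-triangle maneuver on the product system and keeping track of the several $\delta$-quantities; the remaining verifications are routine.
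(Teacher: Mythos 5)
Your argument is correct, and it is worth noting that the paper itself does not prove this proposition: it is stated in the preliminaries and deferred to Gromov \cite{Gmv:green} and Shioya \cite{Shioya:mmg}, so there is no in-paper proof to match. Your treatment of reflexivity and transitivity is the standard one, and you correctly identify antisymmetry as the substantive point, reducing it to the key lemma that a 1-Lipschitz map $h:X\to X$ with $h_*\mu_X=\mu_X$ and $X=\supp\mu_X$ preserves distances; this is exactly the lemma used in the references, but your proof of it via Poincar\'e recurrence applied to $(x,y)\mapsto(h(x),h(y))$ on $(X\times X,\mu_X\otimes\mu_X)$ is a self-contained and legitimate route (recurrence needs no invertibility, and the product system is the right device to get a simultaneous return time). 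The passage from the lemma to antisymmetry is also complete: the chain of inequalities forces $f$ to be distance-preserving, and $f(X)$ is dense in $Y$ (an open set disjoint from $f(X)$ would have $\mu_Y$-measure zero, contradicting $Y=\supp\mu_Y$) and closed (complete isometric image), so $f$ is an mm-isomorphism. Two small points of hygiene: your stated constants do not quite deliver $d_X(h^N(a'),a')<\delta/6$, since recurrence into $B_r(a)\times B_r(b)$ only gives a bound of $2r<\delta/3$; simply take $r<\delta/12$ (or redo the final triangle-inequality sum with $2r$ in place of $\delta/6$, which still closes), and note that what your claim actually yields and what you actually use is distance-preservation of $h$ rather than surjectivity, which is all that is needed since surjectivity is established only for $f$ and only at the last step.
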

\begin{rem}\label{rem:Lip_ord}
Since an element $\mu$ of 1-measurement $\mathcal M(X;1)$ is a measure on the real line $\R$, the triple $\mmr\mu$ is an \mmsp.
%1-メジャーメントの元$\mu\in\mathcal M(X;1)$は実数空間$\R$上の測度であるので三つ組$\mmr\mu$は\mmsp である．
We define the Lipschitz order between two elements of $\mathcal M(X;1)$ by considering $\mu\in\mathcal M(X;1)$ as an \mmsp\ in the above way.
%このようにして$\mu\in\mathcal M(X;1)$を\mmsp とみなすと$\mathcal M(X;1)$の元同士にリプシッツ順序を考えることができる．
In this manner, we consider the maximum and maximal elements of the 1-measurement $\mathcal M(X;1)$ with respect to the Lipschitz order.
%したがって，1-メジャーメント$\mathcal M(X;1)$においてリプシッツ順序に関して最大元や極大元を考察できる．
For two measure $\mu,\nu\in\mathcal M(X;1)$, we write $\mu\prec\nu$ as 
$\mmr\mu\prec\mmr\nu$ for simplicity.
%二つの測度$\mu,\nu\in\mathcal M(X;1)$において$\mmr\mu\prec\mmr\nu$が成り立っているときこれを単に$\mu\prec\nu$と書く．
\end{rem}
\begin{rem}
For a Borel probability measure $\mu$ on the real line $\R$, we immediately see that the measure $\mu$ is the maximum of the 1-measurement $\mathcal M(\mmr\mu;1)$.
\end{rem}

\subsection{Observable diameter and partial diamter}
%\subsection{オブザーバブル直径とパーシャル直径}
\begin{dfn}[Partial diameter]
Let $X$ be an \mmsp. 
%$X$を\mmsp とする．
For any real number $\alpha\in[0,1]$, we define the {\it partial diameter $\diam(X;\alpha)=\diam(\mu_X;\alpha)$ of $X$} as
%実数$\alpha\in[0,1]$に対し，{\it パーシャル直径}$\diam(X;\alpha)=\diam(\mu_X;\alpha)$を以下で定義する．
\[
\diam(X;\alpha):=\inf\set{\diam A\mid \mu_X(A)\geq \alpha, \, A\in\mathcal B_X},
\]
where the {\it diameter of $A$} is defined by $\diam A:=\sup_{x,y\in A}d_X(x,y)$ for $A\neq\emptyset$ and $\diam \emptyset:=0$. 
%ここで$A\neq\emptyset$のとき$\diam A:=\sup_{x,y\in A}d_X(x,y)$，$\diam\emptyset:=0$と定義する．
\end{dfn}
\begin{dfn}[Observable diamter]
%\begin{dfn}[オブザーバブル直径]
Let $X$ be an \mmsp.
%$X$を\mmsp とする．
For any real number $\kappa\in[0,1]$, we define the {\it $\kappa$-observable diameter \\%改行
$\ObsDiam(X;-\kappa)$ of $X$} as
%and the {\it observable diameter $\ObsDiam(X)$ of $X$} as
%$\kappa\in[0,1]$に対して，$X${\it の}$\kappa$-{\it オブザーバブル直径}$\ObsDiam(X;-\kappa)$と{\it オブザーバブル直径}$\ObsDiam(X)$を以下で定義する．
\[
\ObsDiam(X;-\kappa):=\sup_{\mu\in\mathcal M(X;1)}\diam(\mu;1-\kappa).
\]
%\begin{align*}
%\ObsDiam(X;-\kappa)&:=\sup_{\mu\in\mathcal M(X;1)}\diam(\mu;1-\kappa),\\
%\ObsDiam(X)&:=\inf_{\kappa\in[0,1]}\max\set{\ObsDiam(X;-\kappa),\kappa}.
%\end{align*}

%\begin{align*}
%\ObsDiam(X;-\kappa)&:=\sup\set{\diam(f_*\mu_X; 1-\kappa)\mid \text{$f:X\to \mathbb R%$は1-リプシッツ写像} },\\
%\ObsDiam(X)&:=\inf_{\kappa\in[0,1]}\max\set{\ObsDiam(X;-\kappa),\kappa}.
%\end{align*}
\end{dfn}
\begin{prop}\label{prop:Lip_invariant}
Let $X$ and $Y$ be two \mmsp s and $\kappa\in[0,1]$ be a real number.
%$X,Y$を\mmsp とし，$\kappa\in[0,1]$とする．
If we have $Y\prec X$, then we obtain
%$Y\prec X$のとき，以下が成り立つ．
\begin{align*}
\diam(Y;1-\kappa)&\leq\diam(X;1-\kappa),\\
\ObsDiam(Y;-\kappa)&\leq\ObsDiam(X;-\kappa).
%\ObsDiam(Y)&\leq\ObsDiam(X).
\end{align*}
\end{prop}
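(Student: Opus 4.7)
The plan is to prove the two inequalities in sequence: first the partial diameter bound, then deduce (or re-argue in parallel) the observable diameter bound using an inclusion of 1-measurements. The hypothesis $Y \prec X$ furnishes a 1-Lipschitz map $f : X \to Y$ with $f_*\mu_X = \mu_Y$, and essentially everything follows by pushing subsets and functions through $f$.

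For the partial diameter inequality, I would fix $\varepsilon > 0$ and choose a Borel set $A \subset X$ with $\mu_X(A) \geq 1 - \kappa$ and $\diam A \leq \diam(X; 1-\kappa) + \varepsilon$. The natural candidate in $Y$ is $f(A)$, which satisfies $\diam f(A) \leq \diam A$ because $f$ is 1-Lipschitz. Since $f(A)$ need not be Borel, I would replace it by its closure $\overline{f(A)}$, which is closed and therefore Borel, and still satisfies $\diam \overline{f(A)} = \diam f(A) \leq \diam A$. From $A \subseteq f^{-1}(\overline{f(A)})$ and $f_*\mu_X = \mu_Y$ we get
\[
\mu_Y(\overline{f(A)}) = \mu_X(f^{-1}(\overline{f(A)})) \geq \mu_X(A) \geq 1 - \kappa,
\]
so $\diam(Y; 1-\kappa) \leq \diam \overline{f(A)} \leq \diam(X; 1-\kappa) + \varepsilon$. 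Letting $\varepsilon \to 0$ gives the first inequality.

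For the observable diameter inequality, the cleanest route is to note that the composition of 1-Lipschitz maps is 1-Lipschitz: for any 1-Lipschitz $g : Y \to \R$, the map $g \circ f : X \to \R$ is 1-Lipschitz and
\[
(g \circ f)_* \mu_X = g_*(f_* \mu_X) = g_* \mu_Y,
\]
so $g_*\mu_Y \in \mathcal M(X;1)$. Hence $\mathcal M(Y;1) \subseteq \mathcal M(X;1)$, and taking the supremum of $\diam(\,\cdot\,; 1-\kappa)$ over the smaller set gives $\ObsDiam(Y; -\kappa) \leq \ObsDiam(X; -\kappa)$. The only real subtlety in the whole argument is the non-Borel nature of $f(A)$, which is resolved by the closure trick; everything else is bookkeeping with the push-forward identity.
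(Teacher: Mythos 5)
Your proof is correct, and it is the standard argument that the paper itself omits (Proposition \ref{prop:Lip_invariant} is stated without proof, with the reader referred to Gromov and Shioya): push a near-optimal set through the dominating map $f$, using the closure of $f(A)$ to handle Borel measurability, and deduce the observable diameter bound from the inclusion $\mathcal M(Y;1)\subseteq\mathcal M(X;1)$ given by composing 1-Lipschitz functions with $f$. Both steps, including the push-forward measure estimate $\mu_Y(\overline{f(A)})\geq\mu_X(A)$, are sound.
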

In other words, the partial diameter and the $\kappa$-observable diameter are non-decreasing invariants with respect to the Lipschitz order.
%すなわち，パーシャル直径と（$\kappa$-)オブザーバブル直径はリプシッツ順序に関して単調増加な不変量である．

\subsection{L{\'e}vy's isoperimetric inequality}
%\subsection{L\'evyの等周不等式}
Let $\Sn$ be the $n$-dimensional unit sphere centered at the origin in the (n+1)-dimensional Euclidean space $\R^{n+1}$.
%$\Sn$をユークリッド$\mathbb R^{n+1}$の原点を中心とした単位球面とする．
We assume the distance $d_\Sn(x,y)$ between two points $x$ and $y$ in $\Sn$ to be the geodesic distance and the measure $\mu_\Sn$ on $\Sn$ to be the Riemannian volume measure on $\Sn$ normalized as $\mu_\Sn(\Sn)=1$.
%$\Sn$上の距離$d_\Sn$はリーマン距離とし，$\Sn$上のリーマン測度を正規化したものを$\mu_\Sn$とする．
Then, $(\Sn,\,d_\Sn,\,\mu_\Sn)$ is an \mmsp.
%このとき，$(\Sn,\,d_\Sn,\,\mu_\Sn)$は\mmsp である．
%We denote the function $v:[0,\pi]\to [0,1]$ that the $\mu_\Sn$-measure of a metric ball of radius $r$ in $\Sn$, i.e.
%また，
%\[
%v(r):=\frac{\int^r_0\sin^{n-1}t\,dt}{\int^\pi_0\sin^{n-1}t\,dt}.
%\]
%を$\Sn$の半径$r$の距離球の$\mu_\Sn$による測度とする．

\begin{thm}[L{\'e}vy's isoperimetric inequality \cite{Levy:iso,Milman:iso}]\label{thm:levy_ineq}
%\begin{thm}[L\'evyの等周不等式\cite{Levy:iso,Milman:iso}]\label{thm:levy_ineq}
For any closed subset $\Omega\subset \Sn$, we take a metric ball $B_\Omega$ of $\Sn$ with $\mu_\Sn(B_\Omega)=\mu_\Sn(\Omega)$.
%任意の閉部分集合$\Omega\subset \Sn$に対して，$\mu_\Sn(B_\Omega)=\mu_\Sn(\Omega)$をみたす距離球$B_\Omega\subset\Sn$をとる．
Then we have
%このとき任意の$r>0$に対して，
\[
\mu_\Sn(U_r(\Omega))\geq\mu_\Sn(U_r(B_\Omega))
\]
for any $r>0$.
%が成り立つ．
\end{thm}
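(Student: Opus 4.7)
The plan is to prove L\'evy's isoperimetric inequality by the classical technique of two-point symmetrization (polarization) on the sphere, iterated to converge to a spherical cap. Fix a great hypersphere $H \subset \Sn$ with reflection $R_H$, and let $H^+, H^-$ denote the two closed hemispheres with common boundary $H$. For a closed set $\Omega \subset \Sn$ define
\[
\sigma_H(\Omega) := \{x \in H^+ \mid x \in \Omega \text{ or } R_H x \in \Omega\} \cup \{x \in H^- \mid x \in \Omega \text{ and } R_H x \in \Omega\}.
\]
The first step is to check two elementary properties: $\sigma_H(\Omega)$ is closed, and $\mu_\Sn(\sigma_H(\Omega)) = \mu_\Sn(\Omega)$. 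Both follow from the fact that $R_H$ is an isometry preserving $\mu_\Sn$ and from inspecting the definition on each reflection orbit $\{x, R_H x\}$, which contains the same number of points (zero, one, or two) in $\Omega$ as in $\sigma_H(\Omega)$.

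Next I would establish the decisive geometric inclusion
\[
U_r(\sigma_H(\Omega)) \subset \sigma_H(U_r(\Omega)) \qquad (r > 0),
\]
which combined with measure-preservation yields the monotonicity $\mu_\Sn(U_r(\sigma_H(\Omega))) \le \mu_\Sn(U_r(\Omega))$. The proof is a case analysis depending on the hemisphere containing a point $y \in U_r(\sigma_H(\Omega))$ and the hemisphere containing the point of $\sigma_H(\Omega)$ that realises the distance. The only geometric input needed is the elementary inequality $d_\Sn(x,y) \le d_\Sn(R_H x, y)$ whenever $x$ and $y$ lie in the same closed hemisphere relative to $H$, which on the unit sphere reduces to $\langle x, y\rangle \ge \langle R_H x, y\rangle$.

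To finish, I would iterate the polarization. Choose a countable dense family $(H_k)$ of great hyperspheres and set $\Omega_{k+1} := \sigma_{H_k}(\Omega_k)$ with $\Omega_0 := \Omega$. Each $\Omega_k$ has the same measure as $\Omega$, and $\mu_\Sn(U_r(\Omega_k))$ is non-increasing in $k$. Using compactness of the space of closed subsets of $\Sn$ in the Hausdorff topology (Blaschke's theorem), extract a convergent subsequence with limit $\Omega_\infty$, and show that $\Omega_\infty$ is invariant under $\sigma_H$ for every $H$ in the dense family. A standard characterisation then forces $\Omega_\infty$ to be a spherical cap, and the measure constraint identifies it with $B_\Omega$ up to isometry. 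A semicontinuity argument for $\Omega \mapsto \mu_\Sn(U_r(\Omega))$ under Hausdorff convergence then delivers the desired inequality.

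The main obstacle is the convergence step. One must verify that a closed set invariant under $\sigma_H$ for every $H$ in a dense family of great hyperspheres is necessarily a spherical cap; the idea is to use polarizations to move any given point of $\Omega_\infty$ toward a chosen ``north pole'', showing $\Omega_\infty$ must be rotationally symmetric about some axis. One also needs to reconcile the semicontinuity of the enlargement functional with Hausdorff limits, which typically relies on outer regularity of $\mu_\Sn$ together with the fact that closed neighborhoods $B_r(\Omega)$ are well-behaved under Hausdorff convergence. The remaining verifications (closure, measure invariance, the case analysis for the key inclusion) are routine but require careful bookkeeping.
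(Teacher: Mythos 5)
First, a point of comparison: the paper does not prove Theorem \ref{thm:levy_ineq} at all --- it is quoted from L\'evy and Figiel--Lindenstrauss--Milman and used as a black box in the proof of Theorem \ref{sn_max} --- so you are attempting a proof where the paper offers only a citation. Your two-point symmetrization (polarization) strategy is a legitimate, known route to spherical isoperimetry, and its first two steps are sound: $\sigma_H$ preserves closedness and $\mu_\Sn$, and the inclusion $U_r(\sigma_H(\Omega))\subset\sigma_H(U_r(\Omega))$ does follow from the reflection inequality $d_\Sn(x,y)\le d_\Sn(R_Hx,y)$ for $x,y$ in the same closed hemisphere, by the case analysis you indicate.

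The genuine gap is the convergence step, and it is not just bookkeeping. (i) Applying $\sigma_{H_1},\sigma_{H_2},\dots$ once each along a dense family and extracting a Hausdorff limit does not make the limit invariant under any $\sigma_{H_k}$: invariance under an earlier polarization is destroyed by later ones. You need each hypersphere to recur infinitely often together with a quantitative mechanism forcing invariance in the limit, e.g.\ a functional such as $\int_{\Omega_k}d_\Sn(x,p)\,d\mu_\Sn(x)$ or $\mu_\Sn(\Omega_k\cap C)$ for a fixed cap $C$ centered at a pole $p$, monotone under polarizations oriented toward $p$ and strictly improving unless the set is already essentially invariant; this is where the real work in Baernstein--Taylor/Brock--Solynin-type arguments lies, and your sketch leaves it unresolved. (ii) Your proposed characterization of the limit is too weak: rotational symmetry about an axis does not force a cap (an equatorial band is rotationally symmetric). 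The correct statement, which you must prove and then transfer from your countable family to all hyperspheres by continuity, is that a closed set invariant under every polarization oriented so that $p\in H^+$ is a closed cap centered at $p$ (given $x$ in the set and $d_\Sn(y,p)<d_\Sn(x,p)$, polarize across the perpendicular bisector of $x$ and $y$). (iii) The semicontinuity step mixes open and closed enlargements: Hausdorff convergence only gives $\mu_\Sn(U_r(\Omega_\infty))\le\inf_{\varepsilon>0}\mu_\Sn(U_{r+\varepsilon}(\Omega))=\mu_\Sn(B_r(\Omega))$, so the argument should be run at radii $r'<r$ and then $r'\uparrow r$; moreover the Hausdorff limit can gain measure, so it need not equal $B_\Omega$ up to isometry (this is harmless for the inequality, but the identification as stated is unjustified). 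As it stands, the proposal is a correct outline of a known proof whose hardest step is missing.
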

\subsection{Box distance}
%\subsection{ボックス距離}
In this subsection, we briefly describe the box distance which is needed in subsection \ref{sec:normal_law}.
%この節では\ref{sec:normal_law}節での証明に用いられるボックス距離について簡単に述べる．
\begin{dfn}[Parameter]
%\begin{dfn}[パラメータ]
Let $I:=[0,1)$ and $\mathcal L^1$ be the one-dimensional Lebesgue measure on $I$.
%$I:=[0,1)とおき，$測度$\mathcal L^1$を$I$上のルベーグ測度とする．
Let $X$ be a topological space with a Borel probability measure $\mux$.
%$X$をボレル確率測度$\mux$を備えた位相空間とする．
A map $\varphi:I\to X$ is called a {\it parameter of $X$} if $\varphi$ is a Borel-measurable map such that
%写像$\varphi:I\to X$がボレル可測写像であり
\[
\varphi_*\mathcal L^1=\mux.
\]
%を満たすとき，$\phi$は$X$のパラメータであるという．
\end{dfn}
\begin{dfn}[Pseudo-metric]
%\begin{dfn}[擬距離]
A {\it pseudo-metric $\rho$ on a set $S$} is defined to be a function $\rho:S\times S\to[0,\infty)$ satisfying that, for any $x,y,z\in S$,
%$S$を集合とする．関数$\rho:S\times S\to[0,\infty)$が擬距離であるとは，以下の条件を満たすことをいう．任意の三点$x,y,z\in S$に対し，
\begin{enumerate}
\item $\rho(x,x)=0,$
\item $\rho(y,x)=\rho(x,y),$
\item $\rho(x,z)\leq\rho(x,y)+\rho(y,z)$.
\end{enumerate}
\end{dfn}
\begin{dfn}[Box distance]
%\begin{dfn}[ボックス距離]
For two pseudo-metrics $\rho_1$ and $\rho_2$ on $I$, we define $\Box(\rho_1,\rho_2)$ to be the infimum of $\varepsilon\geq 0$ satisfying that there exists a Borel subset $I_0\subset I$ such that
%二つの$I$上の擬距離$\rho_1,\rho_2$に対し，$\Box(\rho_1,\rho_2)$を以下のように定義する．
%\[
%\Box(\rho_1,\rho_2):=\inf\set{\varepsilon\geq 0\mid \text{あるボレル部分集合$I_0$が存在して以下の二つの条件を満たす}},
%\]
\begin{enumerate}
\item $|\rho_1(s,t)-\rho_2(s,t)|\leq\varepsilon$ for any $s,t\in I_0$,
%\item 任意の$s,t\in I_0$に対して$|\rho_1(s,t)-\rho_2(s,t)|\leq\varepsilon ,$
\item $\mathcal L^1(I_0)\geq 1-\varepsilon$.
%\item $\mathcal L^1(I_0)\geq 1-\varepsilon.$
\end{enumerate}
We define the {\it box distance $\Box(X,Y)$ between two \mmsp s $X$ and $Y$} to be the infimum of $\Box(\varphi^*d_X,\psi^*d_Y)$, where $\varphi:I\to X$ and $\psi:I\to Y$ run over all parameters of $X$ and $Y$, respectively, and where $\varphi^*\dx(s,t):=\dx(\varphi(s),\varphi(t))$ for $s,t\in I$.
%二つの\mmsp $X,Y$に対して，ボックス距離$\Box(X,Y)$を以下で定義する．
%\[
%\Box(X,Y):=\inf\set{\Box(\varphi^*d_X,\psi^*d_Y)\mid \text{$\varphi:I\to X,\psi:I\to Y$ is parameters of $X$ and $Y$ respectively}}.
%\Box(X,Y):=\inf\set{\Box(\varphi^*d_X,\psi^*d_Y)\mid \text{$\varphi:I\to X,\psi:I\to Y$はそれぞれ$X,Y$のパラメータ}}.
%\]
\end{dfn}
\begin{thm}
The box distance $\Box$ is a metric on the set $\mathcal X$ of mm-isomorphism classes of \mmsp s.
%ボックス距離は$\mathcal X$上の距離になる．
\end{thm}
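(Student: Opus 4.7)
The plan is to verify each of the four metric axioms in turn. Non-negativity is immediate and finiteness ($\Box(X,Y) \leq 1$) is witnessed trivially by $\varepsilon = 1$ with $I_0 = \emptyset$. For symmetry, conditions (1) and (2) in the definition of $\Box(\rho_1, \rho_2)$ are symmetric in the pair of pseudo-metrics, and the roles of the two parameters in the definition of $\Box(X,Y)$ are interchangeable, so $\Box(X,Y) = \Box(Y,X)$.

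For the triangle inequality, given $\varepsilon > 0$ I would fix parameters $\varphi : I \to X$, $\psi : I \to Y$ with $\Box(\varphi^* d_X, \psi^* d_Y) < \Box(X,Y) + \varepsilon$ witnessed by $I_0 \subset I$, and parameters $\psi' : I \to Y$, $\chi : I \to Z$ with $\Box((\psi')^* d_Y, \chi^* d_Z) < \Box(Y,Z) + \varepsilon$ witnessed by $I_0' \subset I$. The obstacle is that typically $\psi \neq \psi'$. I would address this by a reparametrization argument: since $(I, \mathcal L^1)$ is a standard Lebesgue space and both $\psi, \psi'$ push $\mathcal L^1$ forward to $\mu_Y$, one can build a Borel measure-preserving map $T : I \to I$ with $\psi \circ T = \psi'$ almost everywhere. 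Replacing $(\varphi, \psi)$ by $(\varphi \circ T, \psi \circ T)$ preserves the bound (and replaces $I_0$ by $T^{-1}(I_0)$), and now both pairs share the same parameter for $Y$. On $T^{-1}(I_0) \cap I_0'$, which has measure at least $1 - \Box(X,Y) - \Box(Y,Z) - 2\varepsilon$, the triangle inequality for $|\cdot|$ gives $|(\varphi \circ T)^* d_X - \chi^* d_Z| \leq \Box(X,Y) + \Box(Y,Z) + 2\varepsilon$, and letting $\varepsilon \to 0$ yields the desired bound.

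The easy direction of the identification axiom is that if $f : X \to Y$ is an mm-isomorphism, then for any parameter $\varphi$ of $X$ the pair $(\varphi, f \circ \varphi)$ gives $\varphi^* d_X = (f \circ \varphi)^* d_Y$ identically, so $\Box(X,Y) = 0$. The converse is the hardest step. Assuming $\Box(X,Y) = 0$, take parameters $\varphi_n, \psi_n$ with $\Box(\varphi_n^* d_X, \psi_n^* d_Y) < 1/n$ witnessed by Borel sets $I_n$ of measure $\geq 1 - 1/n$. Borel--Cantelli applied to the complements gives a co-null set of $s \in I$ belonging to $I_n$ for all but finitely many $n$, from which I would choose a countable dense collection $\{s_k\}$. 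Using separability and the tightness of $\mu_X, \mu_Y$ on the Polish spaces $X, Y$, a diagonal extraction yields a subsequence along which $\varphi_n(s_k) \to x_k \in X$ and $\psi_n(s_k) \to y_k \in Y$ for every $k$; the inequality $|d_X(\varphi_n(s_k), \varphi_n(s_\ell)) - d_Y(\psi_n(s_k), \psi_n(s_\ell))| \to 0$ then forces $d_X(x_k, x_\ell) = d_Y(y_k, y_\ell)$, so the assignment $x_k \mapsto y_k$ extends by uniform continuity to an isometry $f$ between the closures.

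The main obstacle is verifying that this limiting isometry is in fact an mm-isomorphism between $\supp \mu_X$ and $\supp \mu_Y$: one must show that the two closures exhaust the respective supports and that $f_* \mu_X = \mu_Y$. I would do this by noting that $(\varphi_n)_* \mathcal L^1 = \mu_X$ and $(\psi_n)_* \mathcal L^1 = \mu_Y$ identically, so the empirical measures concentrated on the samples $\varphi_n(s_k)$ (respectively $\psi_n(s_k)$) converge weakly to $\mu_X$ (respectively $\mu_Y$) as the $\{s_k\}$ densify $I$, and this weak convergence together with the pointwise relation $f(x_k) = y_k$ is enough to propagate $f_* \mu_X = \mu_Y$ and to show that both closures fill the supports.
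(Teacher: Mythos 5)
The paper itself does not prove this theorem; it is quoted from the literature (Gromov, Shioya), so your proposal has to stand on its own, and two of its steps do not. The first is the exact reparametrization lemma in your triangle inequality: it is false that any two parameters $\psi,\psi'$ of $Y$ are related by a Borel measure-preserving $T:I\to I$ with $\psi\circ T=\psi'$ a.e. Take $Y=[0,1]$ with Lebesgue measure, $\psi'(s)=s$ and $\psi(s)=2s \bmod 1$. Any $T$ with $\psi\circ T=\psi'$ a.e. must choose, for a.e.\ $s$, one of the two fiber points $s/2$ or $(s+1)/2$; writing $A=\{s: T(s)=s/2\}$, measure preservation forces $\mathcal L^1(A\cap F)=\tfrac12\mathcal L^1(F)$ for every Borel $F\subset I$, which contradicts the Lebesgue density theorem. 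The obstruction is general: a deterministic map cannot split the non-Dirac conditional measures of $\mathcal L^1$ along the fibers of $\psi$. What is true, and what the standard proof uses, is an approximate version: given $\delta>0$, partition $Y$ into countably many Borel sets of diameter $<\delta$, discard a tail of small mass, and match the $\psi$- and $\psi'$-preimages of each piece by measure isomorphisms of subsets of $I$; this yields a measure-preserving $T$ with $d_Y(\psi(T(s)),\psi'(s))<\delta$ off a set of measure $<\delta$, which gives the triangle inequality up to $2\delta$ and then let $\delta\to 0$. Your argument can be repaired this way, but as stated the key lemma fails.

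The second and more serious gap is in the implication $\Box(X,Y)=0\Rightarrow X$ mm-isomorphic to $Y$. Minor point first: $\sum 1/n=\infty$, so Borel--Cantelli requires passing to a subsequence with summable exceptional measures. The real problem is the pointwise sampling: a parameter is only a Borel map, determined up to $\mathcal L^1$-null sets, and $\{s_k\}$ is null, so the values $\varphi_n(s_k)$ carry no information about $\mu_X$ whatsoever --- they can be altered arbitrarily without changing anything. Consequently there is no reason the sequences $\varphi_n(s_k)$ should subconverge (tightness of $\mu_X$ only controls sets of positive $\mathcal L^1$-measure of parameters), no reason the limit points should be dense in $\supp\mu_X$, and the claim that ``empirical measures concentrated on the samples converge weakly to $\mu_X$ as the $s_k$ densify'' is false in general. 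Since this is exactly the step you yourself flag as the main obstacle, the heart of the theorem is left unproved. A working route ties the construction to the measure rather than to chosen points: for $\varepsilon_k\to 0$, take finite Borel partitions of $X$ into pieces of small diameter and known masses, use $\Box$-smallness to produce nets in $Y$ that are almost isometric to the pieces' representatives and carry almost the same masses, pass to a limit isometry by a diagonal argument, and verify $f_*\mu_X=\mu_Y$ by comparing measures of balls; alternatively, show that the distance-matrix distributions of $X$ and $Y$ coincide and invoke Gromov's mm-reconstruction theorem. Either way, a genuinely measure-theoretic argument is needed where your proposal currently relies on values at a null set.
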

\begin{prop}\label{prop:box_prok}
Let $X$ be a complete separable metric space. For any two Borel probability measures $\mu$ and $\nu$ on $X$, we have,
\[
\Box((X,\mu),(X,\nu))\leq 2d_{\mathrm P}(\mu,\nu),
\]
where $d_{\mathrm P}$ is the Prohorov distance.
\end{prop}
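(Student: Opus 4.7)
The natural strategy is to transfer a near-optimal coupling between $\mu$ and $\nu$ into a simultaneous parametrization of $(X,\mu)$ and $(X,\nu)$ by the unit interval $I$. The bridge is Strassen's coupling characterization of the Prohorov distance: for every $\varepsilon > d_{\mathrm P}(\mu,\nu)$, there exists a Borel probability measure $\pi$ on $X\times X$ whose marginals are $\mu$ and $\nu$ and which satisfies
\[
\pi(\{(x,y)\in X\times X\mid d_X(x,y)>\varepsilon\})\le \varepsilon.
\]

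\textbf{Key steps.} Fix such an $\varepsilon$ and such a coupling $\pi$. Since $X\times X$ is a complete separable metric space and $\pi$ is Borel, there exists a parameter $\Phi:I\to X\times X$ with $\Phi_*\mathcal L^1=\pi$. Set $\varphi:=p_1\circ\Phi$ and $\psi:=p_2\circ\Phi$, where $p_1,p_2$ are the two coordinate projections; then $\varphi$ parametrizes $(X,\mu)$ and $\psi$ parametrizes $(X,\nu)$. Define
\[
I_0:=\{s\in I\mid d_X(\varphi(s),\psi(s))\le\varepsilon\}.
\]
By the push-forward identity, $\mathcal L^1(I_0)=\pi(\{(x,y)\mid d_X(x,y)\le\varepsilon\})\ge 1-\varepsilon$. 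For any $s,t\in I_0$, the triangle inequality gives
\[
|\varphi^*d_X(s,t)-\psi^*d_X(s,t)|\le d_X(\varphi(s),\psi(s))+d_X(\varphi(t),\psi(t))\le 2\varepsilon.
\]
Hence with $I_0$ as above and with tolerance $2\varepsilon$, both defining conditions of $\Box(\varphi^*d_X,\psi^*d_X)$ are verified (the measure condition holds since $1-\varepsilon\ge 1-2\varepsilon$), so $\Box(\varphi^*d_X,\psi^*d_X)\le 2\varepsilon$, and therefore $\Box((X,\mu),(X,\nu))\le 2\varepsilon$. Letting $\varepsilon\searrow d_{\mathrm P}(\mu,\nu)$ yields the claimed inequality.

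\textbf{Main obstacle.} The only non-routine ingredients are the two external facts invoked above: Strassen's theorem (to convert the neighborhood-based definition of $d_{\mathrm P}$ into an actual coupling) and the existence of a Borel parameter $\Phi:I\to X\times X$ pushing $\mathcal L^1$ forward to $\pi$ (an isomorphism of standard Borel measure spaces). Both are standard for Polish spaces, so once they are quoted the estimate above is immediate. The only mildly delicate bookkeeping is matching the tolerance $2\varepsilon$ in the pseudo-metric estimate against the tolerance $\varepsilon$ in the measure estimate so that a single $2\varepsilon$ works for the definition of $\Box$.
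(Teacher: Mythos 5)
Your proof is correct; note that the paper states Proposition \ref{prop:box_prok} without proof, citing it as a known fact from its references, so there is no in-paper argument to compare against. Your route — Strassen's theorem to produce, for each $\varepsilon>d_{\mathrm P}(\mu,\nu)$, a coupling $\pi$ with $\pi(\{(x,y)\mid d_X(x,y)>\varepsilon\})\le\varepsilon$, a parameter $\Phi$ of $(X\times X,\pi)$ giving simultaneous parameters $\varphi=p_1\circ\Phi$ and $\psi=p_2\circ\Phi$, and the quadrilateral inequality on the Borel set $I_0=\{s\in I\mid d_X(\varphi(s),\psi(s))\le\varepsilon\}$ of measure at least $1-\varepsilon\ge 1-2\varepsilon$ — is exactly the standard argument for this estimate, and your $\varepsilon$-versus-$2\varepsilon$ bookkeeping in the definition of $\Box$ is handled correctly.
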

\begin{thm}\label{thm:lip_box}
Let $X,Y,X_n$ and $Y_n$ be \mmsp s, $n=1,2,\dots$. 
%$X,Y,X_n,Y_n$を\mmsp とする．ここで$n$は正の整数とする．
If $X_n$ and $Y_n$ $\Box$-converge to $X$ and $Y$ respectively as $n\to\infty$ and if $X_n\prec Y_n$ for any $n$, then $X\prec Y$.
%$X_n,Y_n$がそれぞれ$X,Y$に$\Box$-収束しており，任意の$n$に対して$Y_n\prec X_n$が成り立っているとき$Y\prec X$が成り立つ．
\end{thm}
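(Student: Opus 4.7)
The plan is to pass to the pseudo-metric picture on $I=[0,1)$ underlying the box distance, extract a subsequential limit of the pseudo-metrics induced on $I$ by the witness maps, and then reconstruct from the limiting pseudo-metrics a 1-Lipschitz map $Y\to X$ pushing $\mu_Y$ forward to $\mu_X$.

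For each $n$, fix a 1-Lipschitz map $f_n:Y_n\to X_n$ with $(f_n)_*\mu_{Y_n}=\mu_{X_n}$ witnessing $X_n\prec Y_n$. From $\Box(Y_n,Y)\to 0$, after passing to a subsequence I choose parameters $\psi_n:I\to Y_n$ and $\psi:I\to Y$ together with Borel sets $I_n\subset I$ satisfying $\mathcal L^1(I_n)\to 1$ on which the pseudo-metrics $\rho_n^Y:=\psi_n^*d_{Y_n}$ converge uniformly to $\rho^Y:=\psi^*d_Y$. Setting $\varphi_n:=f_n\circ\psi_n$ gives a parameter of $X_n$, since $(\varphi_n)_*\mathcal L^1=(f_n)_*\mu_{Y_n}=\mu_{X_n}$, and the 1-Lipschitz property of $f_n$ yields the pointwise bound
\[
\rho_n^X(s,t):=\varphi_n^*d_{X_n}(s,t)\leq\rho_n^Y(s,t)\qquad(s,t\in I).
\]

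The next step, which I expect to be the main obstacle, is to extract a limit of $\rho_n^X$ and identify it with the given limit $X$. Since $\rho_n^X$ is dominated by the $\Box$-convergent, and hence uniformly bounded on each $I_n\times I_n$, sequence $\rho_n^Y$, a diagonal Helly-type selection over a countable dense subset of $I$ produces a further subsequence along which $\rho_n^X$ converges pointwise on a full-measure subset of $I\times I$ to a Borel pseudo-metric $\rho^X$; the pseudo-metric triangle inequality together with the uniform bound upgrade this to $\Box$-convergence of pseudo-metrics. Each $\rho_n^X$ represents $X_n$, $\Box(X_n,X)\to 0$ by hypothesis, and $\Box$ is a metric on $\mathcal X$, so the mm-space associated to $\rho^X$ is mm-isomorphic to $X$; composing with this isomorphism produces a parameter $\varphi:I\to X$ with $\varphi^*d_X=\rho^X$ off a null set. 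Passing to the limit in $\rho_n^X\leq\rho_n^Y$ along the sets $I_n$ then gives $\varphi^*d_X\leq\psi^*d_Y$ on a full-measure subset of $I\times I$.

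Finally, a standard Fubini-type reduction provides a Borel set $I_0\subset I$ with $\mathcal L^1(I_0)=1$ on which the inequality $\varphi^*d_X(s,t)\leq\psi^*d_Y(s,t)$ holds for all pairs $s,t\in I_0$. Define $f$ on $\psi(I_0)$ by $f(\psi(t)):=\varphi(t)$; the inequality makes $f$ well-defined (if $\psi(s)=\psi(t)$ then $\psi^*d_Y(s,t)=0$ forces $\varphi(s)=\varphi(t)$) and 1-Lipschitz. Since $\psi_*\mathcal L^1=\mu_Y$ and $Y=\supp\mu_Y$, the set $\psi(I_0)$ is dense in $Y$, so $f$ extends uniquely to a 1-Lipschitz map $f:Y\to X$, and the relation $f_*\mu_Y=\varphi_*\mathcal L^1=\mu_X$ is immediate. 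This is the desired witness that $X\prec Y$.
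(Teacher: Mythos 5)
The paper itself gives no proof of Theorem \ref{thm:lip_box} (it is quoted from \cite{Shioya:mmg}), and your plan does follow the standard route used there: parametrize everything over $I$, dominate $\varphi_n^*d_{X_n}$ by $\psi_n^*d_{Y_n}$ via the witnesses $f_n$, extract a limiting pseudo-metric, and rebuild a 1-Lipschitz measure-preserving map $Y\to X$. But the sketch has genuine gaps exactly where the weight of the argument lies. First, $\Box(Y_n,Y)\to 0$ only provides, for each $n$, a pair of parameters in which the parameter of $Y$ depends on $n$; your use of a single fixed $\psi$ for all $n$ is a separate (true but nontrivial) lemma that you assert without justification. Second, the extraction step as described does not work: the bound you invoke is not uniform when $Y$ is unbounded; a fixed point of $I$ may fall outside infinitely many of the sets $I_n$, so you must first stabilize the exceptional sets (e.g.\ pass to a subsequence with $\mathcal L^1(I_n)\geq 1-2^{-n}$ and work on $\liminf_n I_n$); and, most importantly, pointwise convergence on a countable subset that is dense for the \emph{Euclidean} metric of $I$ cannot be upgraded to a.e.\ convergence on $I\times I$, because the functions $\rho_n^X$ have no equicontinuity in the Euclidean variable (parameters are wildly discontinuous). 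The only usable regularity is that $\rho_n^X$ is 1-Lipschitz with respect to $\rho_n^Y$, so the Arzel\`a--Ascoli/diagonal argument must be run with respect to the limit pseudo-metric $\rho^Y$, over a countable set whose $\psi$-image is dense in $Y$; this is precisely the technical heart, and your sketch waves through it.

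Third, the concluding ``Fubini-type reduction'' is false as stated: a property holding for almost every $(s,t)\in I\times I$ need not hold on $I_0\times I_0$ for any $I_0$ of full (or even positive) measure; for instance $\set{(s,t)\mid s-t\notin\mathbb{Q}}$ has full measure in $I\times I$ but, by Steinhaus' theorem, contains no product $I_0\times I_0$ with $\mathcal L^1(I_0)>0$. The repair is to organize the limit extraction so that convergence, and hence the inequality $\varphi^*d_X\leq\psi^*d_Y$, holds simultaneously for \emph{all} pairs in $J\times J$ for one full-measure set $J$ --- which the $\rho^Y$-equi-Lipschitz Arzel\`a--Ascoli argument does deliver. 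Granting that, your final reconstruction is correct: $f(\psi(t)):=\varphi(t)$ is well defined and 1-Lipschitz on $\psi(J)$, the set $\psi(J)$ is dense in $Y=\supp\mu_Y$, the extension to all of $Y$ exists because $X$ is complete, and $f_*\mu_Y=\varphi_*\mathcal L^1=\mu_X$. So the architecture is the right one, but the parameter-fixing step and the two selection/upgrade steps need the actual arguments before this counts as a proof.
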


\section{The maximum of the 1-measurement of $n$-dimensional sphere}
%\chapter{$n$次元球面の1-メジャーメントの最大元}
\subsection{The maximum of the 1-measurement of $n$-dimensional sphere --The proof of Theorem \ref{sn_max}--}\label{sec:sn_max}
%\subsection{$n$次元球面の1-メジャーメントの最大元}\label{sec:sn_max}
The aim of this subsection is to prove Theorem \ref{sn_max}.
%この節の目標は定理\ref{sn_max}を証明することである．
We prepare some lemmas for the proof.
%証明にあたっていくつかの補題を用意する．
\begin{lem}\label{lem_sn_max}
Let $X$ be an \mmsp\ and $f: X\to\mathbb R$ be a Borel measurable function.
%$X$をmm-空間，$f: X\to\mathbb R$をボレル可測関数とする．
We define the function $F:\mathbb R\to [0,1]$ as $F(t):=f_*\mu_X((-\infty,t])$.
%$F:\mathbb R\to [0,1]$を$F(t):=f_*\mu_X((-\infty,t])$とおく．
If the function $F|_{\Image f}:\Image f\to[0,1]$ is bijective, we have
%$F|_{\Image f}:\Image f\to[0,1]$が全単射のとき，任意の$a\in [0,1]$に対して，
\[
F_*f_*\mu_X((-\infty,a])=a
\]
for all $a\in [0,1]$.
%が成り立つ．
\end{lem}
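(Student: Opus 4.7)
The plan is to interpret $F$ as the cumulative distribution function of $\nu := f_*\mu_X$ and to show that $F$ transports $\nu$ to the uniform measure on $[0,1]$. I would first observe that $\nu$ is concentrated on $\Image f$, since $\nu(\Image f) = \mu_X(f^{-1}(\Image f)) = \mu_X(X) = 1$. Consequently, for every $a \in [0,1]$,
\[
F_*\nu((-\infty,a]) = \nu(F^{-1}((-\infty,a])) = \nu(\{t \in \Image f : F(t) \leq a\}).
\]

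Next I would exploit the bijectivity hypothesis. For each $a \in [0,1]$, surjectivity of $F|_{\Image f}$ produces some $t_a \in \Image f$ with $F(t_a) = a$, and injectivity makes $t_a$ unique. Since $F$ is nondecreasing on $\R$ (as a CDF) and injective on $\Image f$, it must in fact be strictly increasing on $\Image f$: for $t,s \in \Image f$ with $t < s$, monotonicity of $F$ gives $F(t) \leq F(s)$, and equality is ruled out by injectivity. From this I conclude $\{t \in \Image f : F(t) \leq a\} = \Image f \cap (-\infty, t_a]$: the inclusion $\supset$ comes from monotonicity, and $\subset$ from the fact that any $t \in \Image f$ with $t > t_a$ would satisfy $F(t) > F(t_a) = a$.

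Putting these observations together,
\[
F_*\nu((-\infty,a]) = \nu(\Image f \cap (-\infty,t_a]) = \nu((-\infty,t_a]) = F(t_a) = a,
\]
which is the desired identity. The proof is really careful bookkeeping rather than a deep argument; the one point deserving genuine attention is the derivation of strict monotonicity on $\Image f$ from the hypotheses (monotonicity of $F$ on all of $\R$ together with injectivity only on $\Image f$), since $F$ itself need not be strictly increasing on $\R$ — it may be constant on the complementary gaps of $\Image f$ without affecting the conclusion.
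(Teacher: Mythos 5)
Your proof is correct and follows essentially the same route as the paper: both reduce the computation to the set $\{t\in\Image f : F(t)\le a\}$ (using that $f_*\mu_X$ is carried by $\Image f$), identify that set with $\Image f\cap(-\infty,(F|_{\Image f})^{-1}(a)]$ via monotonicity plus injectivity of $F|_{\Image f}$, and then evaluate $F$ at that point to get $a$. Your explicit remark on strict monotonicity of $F$ on $\Image f$ is just a spelled-out version of the paper's appeal to the ``non-decreasing and bijective property of $F|_{\Image f}$.''
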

\begin{proof}
For any $a\in[0,1]$, we see
%任意に$a\in[0,1]$をとると，
\begin{align*}
F_*f_*\mu_X((-\infty,a])&=f_*\mu_X(F^{-1}((-\infty,a]))\\
&=f_*\mu_X(\set{t\in\mathbb R\mid F(t)\leq a})\\
&=f_*\mu_X(\set{t\in\Image f\mid F|_{\Image f}(t)\leq a})\\
&=f_*\mu_X(\set{t\in\Image f\mid t\leq (F|_{\Image f})^{-1}(a)})\\
&=f_*\mu_X(\set{t\in\mathbb R\mid t\leq (F|_{\Image f})^{-1}(a)})\\
&=f_*\mu_X((-\infty,(F|_{\Image f})^{-1}(a)])\\
&=F((F|_{\Image f})^{-1}(a))=a,
\end{align*}
%が成り立つ．
where we use the non-decreasing and bijective property of $F|_{\Image f}$ in the fourth equality.
%ただし，4行目の等号では$F|_{\Image f}$が広義単調増加かつ全単射であることを用いた．
This completes the proof.
\end{proof}
\begin{lem}\label{tilde_bdd}
For a non-decreasing function $G:\mathbb R\to[0,1]$ with $G(t_0)=0$ for some $t_0\in\mathbb R$, 
%広義単調増加な関数$G:\mathbb R\to[0,1]$で，ある$t_0\in\mathbb R$が存在して，$G(t_0)=0$を満たすものに対し，
we define $\tilde G:(0,1]\to\mathbb R$ by
%関数$\tilde G:(0,1]\to\mathbb R$を
\[
\tilde G(s):=\inf\set{t\in\mathbb R\mid s\leq G(t)}.
\]
%と定める．
Then, $\tilde G$ is non-decreasing and lower bounded on $(0,1]$.
%このとき，$\tilde G$は半開区間$(0,1]$上で広義単調増加かつ下に有界である．
In particular, $\tilde G$ takes finite values on $(0,1]$.
%特に$\tilde G$は有限値をとる．
\end{lem}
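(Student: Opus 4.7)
The plan is to verify the three properties of $\tilde G$ in turn, each as a short manipulation of the defining set $A_s:=\set{t\in\R\mid s\leq G(t)}$ using only the monotonicity of $G$ and the normalization $G(t_0)=0$; no serious obstacle is expected.

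For the monotonicity of $\tilde G$, I would observe that if $0<s_1\leq s_2\leq 1$ and $t\in A_{s_2}$, then $s_1\leq s_2\leq G(t)$, so $A_{s_2}\subset A_{s_1}$; taking infima reverses the inclusion and gives $\tilde G(s_1)\leq \tilde G(s_2)$. For the lower bound, the key step is to rule out any $t\leq t_0$ from $A_s$ when $s>0$: by the monotonicity of $G$ and $G(t_0)=0$, we have $G(t)\leq 0<s$ for every such $t$, so $t_0$ is a lower bound for $A_s$, whence $\tilde G(s)=\inf A_s\geq t_0$ for every $s\in(0,1]$.

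Finite-valuedness on $(0,1]$ then combines two facts: the lower bound just established prevents $\tilde G(s)=-\infty$, while $A_s$ is non-empty (so $\inf A_s<\infty$) for every $s\in(0,1]$ under the hypothesis, natural in the intended applications where $G$ plays the role of a cumulative distribution function of a compactly supported measure, that $G$ attains the value $1$ somewhere. The only subtle point is this non-emptiness of $A_1$, which quietly relies on $G$ actually attaining $1$ rather than merely approaching it; this is the one place where I would expect to need care, while the first two parts are essentially a direct unwinding of the definition.
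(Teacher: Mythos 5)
Your argument for the two substantive conclusions is exactly the paper's argument: monotonicity follows from the inclusion $\{t\in\R\mid s_2\le G(t)\}\subset\{t\in\R\mid s_1\le G(t)\}$ for $0<s_1\le s_2$, and the lower bound follows because no $t\le t_0$ can belong to $A_s$ when $s>0$ (the paper phrases this as: $t\in A_s$ forces $G(t_0)<s\le G(t)$, hence $t_0<t$, so $t_0\le\tilde G(s)$). Where you diverge is the final ``finite values'' assertion, and there your caution is warranted rather than a defect of your proof: the paper's own proof establishes only the lower bound and monotonicity and never rules out $A_s=\emptyset$, in which case $\tilde G(s)=\inf\emptyset=+\infty$. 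Under the hypotheses as literally stated this can occur (take $G\equiv 0$, which is non-decreasing, valued in $[0,1]$, and vanishes somewhere), so finiteness genuinely needs the extra information you point to, namely that $G$ takes values $\ge s$ for each $s\in(0,1]$, and in particular attains $1$ when $s=1$. In the intended application (the proof of Theorem \ref{sn_max}, via Lemma \ref{meas_pres}) $G$ is the distribution function of $g_*\mu_{\Sn}$ with $g$ a $1$-Lipschitz function on the compact sphere, so $G$ attains both $0$ and $1$ and the issue is vacuous; as a comment on the lemma as stated, your observation flags a point that the paper's proof leaves open as well, not a gap you were obliged to close differently.
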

\begin{proof}
We take a real number $t_0\in\R$ satisying $G(t_0)=0$.
%実数$t_0$で$G(t_0)=0$を満たすものをとる．
Fix a real number $s\in(0,1]$ and define $A:=\set{t\in\mathbb R\mid s\leq G(t)}$.
%$s\in(0,1]$を固定し，$A:=\set{t\in\mathbb R\mid s\leq G(t)}$とおく．
For any element $t\in A$, we have $G(t_0)<s\leq G(t)$.
%このとき，任意の$t\in A$に対して，$G(t_0)<s\leq G(t)$が成り立つ．
Since $G$ is non-decreasing, the inequality $t_0<t$ follows.
%$G$は広義単調増加であるので$t_0<t$が従う．
This implies that $t_0\leq\tilde G(s)$.
%よって$t_0\leq\tilde G(s)$が成り立つ．
The function $\tilde G$ is a non-decreasing function on $(0,1]$ because we have $\set{t\in\mathbb R\mid s'\leq G(t)}\supset\set{t\in\mathbb R\mid s\leq G(t)}$ for any $0<s'\leq s$.
%$\tilde G$が半開区間$(0,1]$上で広義単調増加であることは，任意の$0<s'\leq s$に対して
%$\set{t\in\mathbb R\mid s'\leq G(t)}\supset\set{t\in\mathbb R\mid s\leq G(t)}$
%が成り立つことから従う．
This completes the proof.
\end{proof}
\begin{lem}\label{tilde_lem}
Let $G:\mathbb R\to[0,1]$ be a non-decreasing and right continuous function such that $G(t_0)=0$ for some $t_0\in\mathbb R$.
%広義単調増加な右連続関数$G:\mathbb R\to[0,1]$で，ある$t_0\in\mathbb R$が存在して，$G(t_0)=0$を満たすものに対し，
We define $\tilde G:[0,1]\to\mathbb R$ by
%関数$\tilde G:[0,1]\to\mathbb R$を
\[
\tilde G(s):=
\begin{cases}
\inf\set{t\in\mathbb R\mid s\leq G(t)} & \text{if $s\in(0,1]$},\\
c & \text{if $s=0$},
\end{cases}
\]
where $c$ is an arbitrary constant. Then, we have 
%と定める．ここで，$c$は任意の定数である．このとき以下が成り立つ．
\begin{eqnarray}
G\circ\tilde G(s)\geq s, & s\in [0,1],\label{tilde_geq}\\
\tilde G\circ G(t)\leq t, & \text{$t\in\mathbb R$ with $G(t)>0,\label{tilde_leq}$}\\
\tilde G^{-1}((-\infty,t])\setminus\{0\}=(0,G(t)],& t\in\mathbb R.\label{tilde_set}
\end{eqnarray}
\end{lem}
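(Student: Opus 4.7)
The plan is to prove the three inequalities in turn, treating them essentially as standard generalized-inverse identities; the only substantive ingredient beyond Lemma \ref{tilde_bdd} is the right continuity of $G$, which is needed only for (\ref{tilde_geq}).

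For (\ref{tilde_geq}), the case $s=0$ is immediate since $G$ takes values in $[0,1]$, so $G(\tilde G(0))=G(c)\ge 0=s$ regardless of $c$. For $s\in(0,1]$, I would set $A_s:=\set{t\in\R\mid s\le G(t)}$ and note that $A_s$ is upward closed because $G$ is non-decreasing. By the definition of $\tilde G(s)=\inf A_s$, for every $\varepsilon>0$ there is some $t\in A_s$ with $t\le \tilde G(s)+\varepsilon$; upward closedness then gives $\tilde G(s)+\varepsilon\in A_s$, that is $G(\tilde G(s)+\varepsilon)\ge s$. Letting $\varepsilon\downarrow 0$ and invoking the right continuity of $G$ yields $G(\tilde G(s))\ge s$. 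This is the one place where right continuity is genuinely used, and it is the main (mild) obstacle of the proof.

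For (\ref{tilde_leq}), if $G(t)>0$ then $G(t)\in(0,1]$, so $\tilde G(G(t))$ is defined via the $\inf$ formula. The point $t$ itself lies in $\set{t'\in\R\mid G(t)\le G(t')}$, so taking the infimum gives $\tilde G(G(t))\le t$ immediately.

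For (\ref{tilde_set}), I would prove the two inclusions directly. If $s\in(0,G(t)]$, then $t\in\set{t'\in\R\mid s\le G(t')}$, hence $\tilde G(s)\le t$, which shows $s\in\tilde G^{-1}((-\infty,t])\setminus\{0\}$. Conversely, if $s\in\tilde G^{-1}((-\infty,t])\setminus\{0\}$, then $s\in(0,1]$ and $\tilde G(s)\le t$; applying $G$ and using monotonicity together with (\ref{tilde_geq}) gives
\[
s\le G(\tilde G(s))\le G(t),
\]
so $s\in(0,G(t)]$. Both inclusions together yield the claimed equality, completing the proof.
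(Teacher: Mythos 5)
Your proof is correct and follows essentially the same route as the paper's: right continuity is used only for \eqref{tilde_geq}, via the observation that $G(\tilde G(s)+\varepsilon)\geq s$ for all $\varepsilon>0$ (the paper phrases this as a limit of $G$ along the set $\set{t\in\R\mid s\leq G(t)}$), and \eqref{tilde_leq} is the same one-line remark that $t$ lies in the defining set. The only, harmless, divergence is in the inclusion $(0,G(t)]\subset\tilde G^{-1}((-\infty,t])\setminus\{0\}$, where you note directly that $t\in\set{t'\in\R\mid s\leq G(t')}$ gives $\tilde G(s)\leq t$, whereas the paper combines the monotonicity of $\tilde G$ (Lemma \ref{tilde_bdd}) with \eqref{tilde_leq}; your version is slightly more direct.
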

\begin{proof}
We prove \eqref{tilde_geq}. If $s=0$, we have \eqref{tilde_geq} because $\Image G\subset [0,1]$.
%はじめに，式\eqref{tilde_geq}を示す．
%$s=0$のときは$\Image G\subset [0,1]$より明らか．
Fix a real number $s\in(0,1]$ and define $A:=\set{t\in\mathbb R\mid s\leq G(t)}$.
%$s\in(0,1]$を固定し，$A:=\set{t\in\mathbb R\mid s\leq G(t)}$とおく．
By the definition of infimum, we have
%下限の定義より，任意の$t'\in A$に対して
\[
G(t')\geq \inf_{t\in A}G(t)
\]
for any $t'\in A$.
%が成り立つ．
For any $t'>\inf A$, we have $t'\in A$ because $G$ is non-decreasing.
%$G$は広義単調増加であるので，$t'>\inf A$なら$t'\in A$である．
By this, we have
%したがって，
\[
\lim_{t'\to \inf A+0}G(t')\geq \inf_{t\in A}G(t).
\]
%が成り立つ．
We obtain
%$G$の右連続性より
\[G(\inf A)\geq \inf_{t\in A}G(t)\]
by the right continuity of $G$.
%が従う．
Therefore, we have
%よって，
\begin{align*}
G(\tilde G(s))&=G(\inf A)\\
&\geq \inf_{t\in A}G(t)\\
&=\inf\set{G(t)\mid s\leq G(t)}\\
&\geq s.
\end{align*}
%が成り立つ．

We prove \eqref{tilde_leq}.
%次に，式\eqref{tilde_leq}を示す．
We take any real number $t\in \mathbb R$ satisfying $G(t)>0$, then we have
%実数$t\in \mathbb R$で$G(t)>0$を満たすものを任意に取る．
%このとき，
\[
\tilde G(G(t))=\inf\set{t'\in\mathbb R\mid G(t')\geq G(t)}\leq t.
\]
%が成り立つ．

We prove \eqref{tilde_set}.
%最後に式\eqref{tilde_set}を示す．
Take any real number $s\in \tilde G^{-1}((-\infty,t])\setminus\{0\}$.
%任意に実数$s\in \tilde G^{-1}((-\infty,t])\setminus\{0\}$をとる．
Since $\tilde G^{-1}(\mathbb R)=[0,1]$, we have $s\in (0,1]$.
%$\tilde G^{-1}(\mathbb R)=[0,1]$より$s\in(0,1]$である．
It follows from $\tilde G(s)\leq t$ and the non-decreasing property of $G$ that $G\circ \tilde G(s)\leq G(t)$.
%いま，$\tilde G(s)\leq t$であるが，$G$が単調増加であることから$G\circ \tilde G(s)\leq G(t)$が成り立つ．
This implies that $s\leq G(t)$ by \eqref{tilde_geq} and we have $s\in(0,G(t)]$.
%したがって式\eqref{tilde_geq}より$s\leq G(t)$が成り立ち，$s\in(0,G(t)]$を得る．
Conversely, take any real number $s\in(0, G(t)]$.
%逆に任意に$s\in(0, G(t)]$をとると，
We obtain $\tilde G(s)\leq \tilde G\circ G(t)$ because $G$ is non-decreasing by Lemma \ref{tilde_bdd}.
%補題\ref{tilde_bdd}より$\tilde G$は広義単調増加であるので$\tilde G(s)\leq \tilde G\circ G(t)$が成り立つ．
Then, we have $\tilde G(s)\leq t$ by \eqref{tilde_leq}.
%式\eqref{tilde_leq}より$\tilde G(s)\leq t$を得る．
This completes the proof.
\end{proof}
\begin{rem}\label{tilde_borel}
In Lemma \ref{tilde_lem}, $\tilde G$ is a Borel measurable function.
%補題\ref{tilde_lem}において$\tilde G$はボレル可測関数である．
In fact, $\tilde G|_{(0,1]}$ is lower semi-continuous because we have $(\tilde G|_{(0,1]})^{-1}((-\infty,t])=\tilde G^{-1}((-\infty,t])\setminus\{0\}=(0,G(t)]$ and $(0,G(t)]$ is a closed subset in  $(0,1]$.
%実際，補題\ref{tilde_lem}式\eqref{tilde_set}より，任意の$t\in\mathbb R$において
%$(\tilde G|_{(0,1]})^{-1}((-\infty,t])=\tilde G^{-1}((-\infty,t])\setminus\{0\}=(0,G(t)]$
%であり，$(0,G(t)]$は$(0,1]$において閉集合であるので$\tilde G|_{(0,1]}$は下半連続である．
%よって$\tilde G$はボレル可測である．
\end{rem}
\begin{lem}\label{meas_pres}
Let $f,g:X\to\mathbb R$ be two Borel measurable functions and define two functions $F,G:\mathbb R\to[0,1]$ as $F(t):=f_*\mu_X((-\infty,t]),\ G(t):=g_*\mu_X((-\infty,t])$.
%二つのボレル可測写像$f,g:X\to\mathbb R$に対し，二つの関数$F,G:\mathbb R\to[0,1]$を$F(t):=f_*\mu_X((-\infty,t]),\ G(t):=g_*\mu_X((-\infty,t])$とおく．
We assume that some $t_0$ satisfies $G(t_0)=0$.
%ここで，ある$t_0\in\mathbb R$が存在して$G(t_0)=0$を満たすと仮定する．
We define $\tilde G:[0,1]\to\mathbb R$ by
%$\tilde G:[0,1]\to\mathbb R$を
\[
\tilde G(s):=
\begin{cases}
\inf\set{t\in\mathbb R\mid s\leq G(t)} & \text{if $s\in(0,1]$},\\
c & \text{if $s=0$},
\end{cases}
\]
where $c$ is an arbitrary constant. 
%と定める．ここで，$c$は任意の定数である．
We define $\varphi:\mathbb R\to\mathbb R$ by $\varphi:=\tilde G\circ F$.
%$\varphi:\mathbb R\to\mathbb R$を$\varphi:=\tilde G\circ F$とおく．
If $F|_{\Image f}:\Image f\to [0,1]$ is bijective, we have 
%このとき，$F|_{\Image f}:\Image f\to [0,1]$が全単射なら，
\[
\varphi_*f_*\mu_X=g_*\mu_X.
\]
%が成り立つ．
\end{lem}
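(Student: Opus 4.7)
The plan is to verify $\varphi_* f_* \mux = g_* \mux$ by checking that the two measures agree on every half-line $(-\infty, t]$, which is enough since both are Borel probability measures on $\mathbb R$ and such intervals generate $\mathcal B_{\mathbb R}$. For a fixed $t \in \mathbb R$, I would start from
\[
\varphi_* f_* \mux((-\infty, t]) = f_* \mux\bigl(F^{-1}(\tilde G^{-1}((-\infty, t]))\bigr)
\]
and simplify the inner set using the preparation in Lemma \ref{tilde_lem}.

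By equation \eqref{tilde_set} of Lemma \ref{tilde_lem}, the set $\tilde G^{-1}((-\infty, t])$ coincides with $(0, G(t)]$ up to possibly the singleton $\{0\}$ (depending on whether the arbitrary constant $c = \tilde G(0)$ satisfies $c \leq t$). Consequently $F^{-1}(\tilde G^{-1}((-\infty, t]))$ differs from $F^{-1}((0, G(t)])$ by a subset of $F^{-1}(\{0\})$. Since $F$ takes values in $[0,1]$, we have $F^{-1}(\{0\}) = F^{-1}((-\infty, 0])$, and Lemma \ref{lem_sn_max} applied with $a = 0$ gives $f_* \mux(F^{-1}(\{0\})) = 0$. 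Therefore
\[
\varphi_* f_* \mux((-\infty, t]) = f_* \mux\bigl(F^{-1}((-\infty, G(t)])\bigr),
\]
and a second application of Lemma \ref{lem_sn_max} with $a = G(t) \in [0,1]$ identifies this quantity with $G(t) = g_* \mux((-\infty, t])$, which finishes the verification of the CDFs.

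The main delicate point will be handling the artificial value $\tilde G(0) = c$: since $c$ is arbitrary, the set $\tilde G^{-1}((-\infty, t])$ is only determined up to the point $\{0\}$, so we cannot avoid thinking about the case analysis $c \le t$ versus $c > t$. The resolution is that $F^{-1}(\{0\})$ is $f_*\mux$-null by Lemma \ref{lem_sn_max}, and this in turn relies on the bijectivity hypothesis on $F|_{\Image f}$. Once that null-set observation is in place, the rest of the argument is a straightforward bookkeeping of preimages together with the definition of $G$ as the CDF of $g_*\mux$. Borel measurability of $\varphi$ is automatic from Remark \ref{tilde_borel} and the right-continuity of $F$.
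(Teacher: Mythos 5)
Your proposal is correct and follows essentially the same route as the paper's proof: both push the measure forward through $F$ and then $\tilde G$, use \eqref{tilde_set} of Lemma \ref{tilde_lem} to identify $\tilde G^{-1}((-\infty,t])$ with $(0,G(t)]$ up to the point $0$, dispose of that discrepancy (and of $(-\infty,0]$) via $F_*f_*\mu_X((-\infty,0])=0$ from Lemma \ref{lem_sn_max}, and then apply Lemma \ref{lem_sn_max} again with $a=G(t)$ to conclude. Your explicit remark that the arbitrary constant $c=\tilde G(0)$ forces the null-set argument is exactly the point the paper handles in its third and fourth equalities, so no further changes are needed.
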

\begin{proof}
Take any real number $t\in\mathbb R$. we have
%任意に$t\in\mathbb R$をとる．このとき，
\begin{align*}
\varphi_*f_*\mu_X((-\infty,t])&=\tilde G_*F_*f_*\mu_X((-\infty,t])\\
&=F_*f_*\mu_X(\tilde G^{-1}((-\infty,t]))\\
&=F_*f_*\mu_X(\tilde G^{-1}((-\infty,t])\setminus\{0\})\\
&=F_*f_*\mu_X((0,G(t)])\\
&=F_*f_*\mu_X((-\infty,G(t)])\\
&=G(t)\\
&=g_*\mu_X((-\infty,t]).
\end{align*}
In the third and fourth equality, we use $F_*f_*\mu_X((-\infty,0])=0$ obtained by Lemma \ref{lem_sn_max}. 
%ここで，3行目と5行目の等号については補題\ref{lem_sn_max}に$a=0$を適用して得られる$F_*f_*\mu_X((-\infty,0])=0$を用いた．
We use \eqref{tilde_set} of Lemma \ref{tilde_lem} in the fourth equality.
%4行目の等号については補題\ref{tilde_lem}の式\eqref{tilde_set}を用いた．
We have the sixth equality by Lemma \ref{lem_sn_max}.
%6行目の等号については補題\ref{lem_sn_max}を用いた．
This completes the proof.
\end{proof}

\begin{proof}[Proof of Theorem \ref{sn_max}]
Fix a point $\bar x\in\Sn$ and define $\xi:\Sn\to \mathbb R$ by $\xi(x):=d_\Sn(\bar x,x)$.
%点$\bar x\in\Sn$を固定し，$\xi:\Sn\to \mathbb R$を$\xi(x):=d_\Sn(\bar x,x)$と定める．
Take any 1-Lipschitz function $g:\Sn\to\mathbb R$.
%任意に1-リプシッツ関数$g:\Sn\to\mathbb R$をとる．
We prove the existence of a 1-Lipschitz function $\varphi:\mathbb R\to\mathbb R$ satisfying
%このとき，ある1-リプシッツ関数$\varphi:\mathbb R\to\mathbb R$が存在して，
\[
\varphi_*\xi_*\mu_X=g_*\mu_X
\]
in the following.
%が成り立つことを示せばよい．
Put two functions $V,G:\mathbb R\to[0,1]$ as $V(t):=\xi_*\mu_\Sn((-\infty,t]),\ G(t):=g_*\mu_\Sn((-\infty,t])$.
%二つの関数$V,G:\mathbb R\to[0,1]$をそれぞれ$V(t):=\xi_*\mu_\Sn((-\infty,t]),\ G(t):=g_*\mu_\Sn((-\infty,t])$とおく．
We define $\tilde G:[0,1]\to\mathbb R$ as
%関数$\tilde G:[0,1]\to\mathbb R$を$s\in(0,1]$のとき，
\[
\tilde G(s):=\inf\set{t\in\mathbb R\mid s\leq G(t)}
\]
if $s\in(0,1]$, and 
%と定め，$s=0$のとき，いま定義した$\tilde G$を用いて
\[
\tilde G(0)=\lim_{s\to +0}\tilde G(s),
\]
if $s=0$.
%と定義する．
We have $G(t_0)=0$ for some $t_0$ because $g$ have a lower bound.
%ここで，$g$が下に有界であることからある$t_0$が存在して$G(t_0)=0$が成り立つ．
The existence of limit is guaranteed because $G$ is non-decreasing and $\tilde G$ has a lower bound on $(0,1]$ by Lemma \ref{tilde_bdd}.
%さらに，補題\ref{tilde_bdd}より$(0,1]$上で$\tilde G$が下に有界であることが分かるので$G$が広義単調増加であることから極限の存在が保証される．
Put $\varphi:\mathbb R\to\mathbb R$ as $\varphi:=\tilde G\circ V$.
%関数$\varphi:\mathbb R\to\mathbb R$を$\varphi:=\tilde G\circ V$とおく．
%いま，$V|_{\Image \xi}$は全単射であるから，補題\ref{meas_pres}を適用して，
We apply Lemma \ref{meas_pres} to obtain
\[
\varphi_*\xi_*\mu_X=g_*\mu_X
\]
since $V|_{\Image \xi}$ is bijective.
%が得られる．

Let us prove that $\varphi$ is a 1-Lipschitz function.
%いま，$\varphi$が1-リプシッツ関数であることを示せば証明が終わる．
If $t\leq 0$, we have $\varphi(t)=\tilde G(0)$ by $V(t)=0$.
%$t\leq 0$のとき$V(t)=0$より$\varphi(t)=\tilde G(0)$である．
We obtain
%また，$\tilde G$は点$0$において連続であり，$\lim_{t\to +0}V(t)=0$であるから
\[
\lim_{t\to +0}\varphi(t)=\lim_{t\to +0}\tilde G\circ V(t)=\tilde G(0)
\]
because $\tilde G$ is continuous at $0$ and $\lim_{t\to +0}V(t)=0$.
%が成り立つ．
By this, we prove $\varphi$ is a 1-Lipschitz function in the case where $t>0$.
%よって，$t>0$のときに$\varphi$が1-リプシッツであることを示せばよい．
The function $\varphi$ is non-decreasing since two functions $\tilde G, V$ are both non-decreasing.
%さらに，$\tilde G, V$はともに広義単調増加であるので$\varphi$も広義単調増加である．
Thus, it is sufficient to prove that $\varphi(t+\varepsilon)\leq \varphi(t)+\varepsilon$ for any $\varepsilon>0$.
%したがって，任意の$\varepsilon>0$に対して$\varphi(t+\varepsilon)\leq \varphi(t)+\varepsilon$を示せば十分である．
Fix $t>0$ and take any $\varepsilon>0$. We have
%$t>0$を固定し，任意の$\varepsilon>0$をとる．いま，
\begin{align*}
\mu_\Sn(B_t(\bar x))&=\xi_*\mu_\Sn((-\infty,t])\\
&=V(t)\\
&\leq (G\circ \tilde G)(V(t))\\
&=G\circ \varphi (t)\\
&=\mu_\Sn(g^{-1}((-\infty,\varphi(t)])),
\end{align*}
%が成り立つ．
where we use \eqref{tilde_geq} of Lemma \ref{tilde_lem} in the inequality on the third line.
%ただし，3行目の不等号では補題\ref{tilde_lem}の式\eqref{tilde_geq}を用いた．
We obtain
%これより，定理\ref{thm:levy_ineq}(L{\'e}vyの等周不等式)を適用して
\[
\mu_\Sn(B_{t+\varepsilon}(\bar x))\leq\mu_\Sn(B_\varepsilon(g^{-1}((-\infty,\varphi(t)])))
\]
by applying Theorem \ref{thm:levy_ineq} (L{\'e}vy's isoperimetric inequality).
%を得る．
We use this inequality to obtain
%これを用いて
\begin{align*}
V(t+\varepsilon)&=\xi_*\mu_\Sn((-\infty,t+\varepsilon])\\
&=\mu_\Sn(B_{t+\varepsilon}(\bar x))\\
&\leq \mu_\Sn(B_\varepsilon(g^{-1}((-\infty,\varphi(t)])))\\
&\leq\mu_\Sn(g^{-1}(B_\varepsilon((-\infty,\varphi(t)])))\\
&=g_*\mu_\Sn((-\infty,\varphi(t)+\varepsilon])\\
&=G(\varphi(t)+\varepsilon),
\end{align*}
where we have the inequality on the fourth line because $g$ is a 1-Lipschitz function.
%が成り立つ．ここで，4行目の不等号では$g$が1-リプシッツ関数であることを用いた．
Therefore, we have
%したがって，
\begin{align*}
\varphi(t+\varepsilon)&=\tilde G\circ V(t+\varepsilon)\\
&\leq \tilde G\circ G(\varphi(t)+\varepsilon)\\
&\leq\varphi(t)+\varepsilon,
\end{align*}
where we use \eqref{tilde_leq} of Lemma \ref{tilde_lem} in the inequality of the third line.
%が示される．ただし，3行目の不等号では補題\ref{tilde_lem}の式\eqref{tilde_leq}を用いた．
This completes the proof.
\end{proof}

\subsection{The relation between the normal law {\`a} la L{\'e}vy and Theorem \ref{sn_max}}\label{sec:normal_law}
The aim of this section is to prove Corollary \ref{normal_law} by Theorem \ref{sn_max}.
%この節では定理\ref{sn_max}の系である以下の系\ref{normal_law}を証明する．
\begin{cor}[Normal law {\`a} la L{\'e}vy \cite{Gmv:green,Shioya:mmg}]\label{normal_law}
Let $f_n:\Srn\to\mathbb R,\quad n=1,2,\dots ,$ be 1-Lipschitz functions. Assume that a subsequence $\{f_{n_i}\}$ of $\{f_n\}$ satisfy that the push-forward $(f_{n_i})_*\mu_\Srn$ converges weakly to a Borel probability measure $\sigma$. Then we have
%1-リプシッツ関数列$f_n:\Srn\to\mathbb R$において，ある$\{f_n\}$の部分列$\{f_{n_i}\}$が存在して押し出し測度$(f_{n_i})_*\mu_\Srn$がボレル確率測度$\sigma$に$\Box$-収束していると仮定する．このとき
\[
(\mathbb R,|\cdot|,\sigma)\prec(\mathbb R,|\cdot|,\gamma^1).
\]
%が成り立つ．
\end{cor}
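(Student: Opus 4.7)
The plan is to deduce the Corollary from Theorem \ref{sn_max} (in its version on $\Srn$) combined with the passage-to-the-limit tool Theorem \ref{thm:lip_box}. Let $\xi_n:\Srn\to\R$ denote the distance function from a fixed basepoint $\bar x_n\in\Srn$ and set $\nu_n:=(\xi_n)_*\mu_\Srn$. The three steps are: first, establish $(f_n)_*\mu_\Srn\prec\nu_n$ for every $n$; second, show $(\R,|\cdot|,\nu_n)\to(\R,|\cdot|,\gamma^1)$ in box distance as $n\to\infty$; third, pass to the limit via Theorem \ref{thm:lip_box}.

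For the first step, the proof of Theorem \ref{sn_max} given in Section \ref{sec:sn_max} depends on the sphere only through L\'evy's isoperimetric inequality, which is true on every round sphere (either by direct statement or by rescaling Theorem \ref{thm:levy_ineq}). Hence the identical argument shows that $\nu_n$ is the maximum of $\mathcal M(\Srn;1)$, so in particular $(f_{n_i})_*\mu_\Srn\prec\nu_{n_i}$ for every $i$; by Remark \ref{rem:Lip_ord} this reads $(\R,|\cdot|,(f_{n_i})_*\mu_\Srn)\prec(\R,|\cdot|,\nu_{n_i})$.

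For the second step, since $x\mapsto x-\pi\sqrt n/2$ is an isometry of $\R$, the mm-space $(\R,|\cdot|,\nu_n)$ is mm-isomorphic to $(\R,|\cdot|,\tilde\nu_n)$, where $\tilde\nu_n$ is the distribution of $\xi_n-\pi\sqrt n/2$. The density of $\tilde\nu_n$ on $[-\pi\sqrt n/2,\pi\sqrt n/2]$ is proportional to $\cos^{n-1}(t/\sqrt n)$; combining the pointwise limit $\cos^{n-1}(t/\sqrt n)\to e^{-t^2/2}$ with the asymptotics of the Wallis integral for the normalizing constant, Scheff\'e's lemma yields $\tilde\nu_n\to\gamma^1$ weakly. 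On the separable metric space $\R$, weak convergence is equivalent to convergence in the Prohorov distance, so Proposition \ref{prop:box_prok} upgrades this to $(\R,|\cdot|,\tilde\nu_n)\to(\R,|\cdot|,\gamma^1)$ in box distance, hence $(\R,|\cdot|,\nu_n)\to(\R,|\cdot|,\gamma^1)$.

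For the third step, by hypothesis $(f_{n_i})_*\mu_\Srn\to\sigma$ weakly, so Proposition \ref{prop:box_prok} gives $(\R,|\cdot|,(f_{n_i})_*\mu_\Srn)\to(\R,|\cdot|,\sigma)$ in box distance. Applying Theorem \ref{thm:lip_box} to the Lipschitz-order inequality from the first step then yields $(\R,|\cdot|,\sigma)\prec(\R,|\cdot|,\gamma^1)$, as desired. The only non-routine point is the Gaussian limit for $\tilde\nu_n$; all other steps are direct applications of results already proved.
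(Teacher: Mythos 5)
Your proposal is correct and follows essentially the same route as the paper: apply Theorem \ref{sn_max} on $\Srn$ to get $(f_{n_i})_*\mu_\Srn\prec(\xi_{n_i})_*\mu_\Srn$, show the (translated) distance-function distributions converge to $\gamma^1$ via the $\cos^{n-1}(t/\sqrt n)$ density asymptotics (the paper's Lemma \ref{one_point_weak}), convert weak convergence to box convergence through Proposition \ref{prop:box_prok}, and pass to the limit with Theorem \ref{thm:lip_box}. The only cosmetic differences are that you make explicit the rescaling of Theorem \ref{sn_max} to $\Srn$ and cite Scheff\'e's lemma where the paper uses dominated convergence in Lemma \ref{one_point_weak}.
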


We prepare some lemmas to prove Corollary \ref{normal_law}.

\begin{lem}\label{cos_lim}
For any real number $r\in\mathbb R$, we have
%任意の$r\in\mathbb R$に対して，
\[
\cos^{n-1}\frac r{\sqrt n} \to e^{-\frac{r^2}2} \text{\quad as $n\to\infty$}.
\]
%が成り立つ．
\end{lem}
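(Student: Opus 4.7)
The plan is to take logarithms and reduce to the standard limit $(1+a_n/n)^n \to e^a$ via Taylor expansion of $\cos$ near $0$. For a fixed $r \in \mathbb{R}$, once $n$ is large enough that $|r|/\sqrt{n} < \pi/2$, the quantity $\cos(r/\sqrt{n})$ is positive, so passing to logarithms is legitimate.

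First I would write
\[
\log\cos^{n-1}\!\frac{r}{\sqrt n} = (n-1)\log\cos\frac{r}{\sqrt n}
\]
and expand $\cos x = 1 - x^2/2 + O(x^4)$ as $x \to 0$, which gives $\log\cos x = -x^2/2 + O(x^4)$. Substituting $x = r/\sqrt n$ yields
\[
\log\cos\frac{r}{\sqrt n} = -\frac{r^2}{2n} + O\!\left(\frac{1}{n^2}\right),
\]
and multiplying by $n-1$ gives
\[
(n-1)\log\cos\frac{r}{\sqrt n} = -\frac{(n-1)r^2}{2n} + (n-1)\cdot O\!\left(\frac{1}{n^2}\right) \longrightarrow -\frac{r^2}{2}
\]
as $n \to \infty$. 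Exponentiating and using the continuity of $\exp$ finishes the proof.

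There is essentially no obstacle here; the only point to be careful about is that the $O(1/n^2)$ error term is multiplied by $n-1$, so one must check that this product still vanishes, which it plainly does. For a fully elementary write-up one could replace the $O$-notation by an explicit bound, e.g.\ using $|\log\cos x + x^2/2| \leq C x^4$ for $|x| \leq 1$ with an explicit constant $C$, and then observing $(n-1)\cdot C r^4/n^2 \to 0$. This gives the claimed limit $\cos^{n-1}(r/\sqrt n) \to e^{-r^2/2}$ for every $r \in \mathbb{R}$.
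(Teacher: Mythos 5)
Your proof is correct, but it takes a different route from the paper. You pass to logarithms and use the Taylor expansion $\log\cos x=-x^2/2+O(x^4)$, so the whole limit comes out in one stroke from $(n-1)\bigl(-\tfrac{r^2}{2n}+O(n^{-2})\bigr)\to-\tfrac{r^2}{2}$; the only care needed, as you note, is that the error term still vanishes after multiplication by $n-1$, and that $\cos(r/\sqrt n)>0$ for large $n$ so the logarithm is defined. The paper instead avoids logarithms entirely and squeezes the quantity from both sides: for the lower bound it uses the elementary inequality $\cos x\geq 1-x^2/2$ on $[-\pi/2,\pi/2]$ together with the standard limit $\left(1-\tfrac{r^2}{2n}\right)^{n-1}\to e^{-r^2/2}$ to get $\liminf\geq e^{-r^2/2}$, and for the upper bound it uses $\lim_{x\to0}(1-\cos x)/x^2=\tfrac12$ to get, for any $\varepsilon\in(0,1)$, the bound $\cos x\leq 1-\tfrac{1-\varepsilon}{2}x^2$ near $0$, hence $\limsup\leq e^{-(1-\varepsilon)r^2/2}$, and then lets $\varepsilon\to+0$. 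Your version is shorter and more standard; the paper's version is more elementary in that it never invokes the logarithm or $O$-notation, at the cost of running the $\liminf$/$\limsup$ argument with an auxiliary $\varepsilon$. Both are complete proofs of the stated limit.
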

\begin{proof}
If $r=0$, then the lemma is trivial. Assume $r\neq 0$.
%It is sufficient that we prove in the case where $r\neq 0$.
%$r=0$のときは明らか．よって$r\neq 0$のときに証明する．
We first prove $\liminf_{n\to\infty}\cos^{n-1}\frac r{\sqrt n}\geq e^{-\frac{r^2}2}$.
%まず，$\liminf_{n\to\infty}\cos^{n-1}\frac r{\sqrt n}\geq e^{-\frac{r^2}2} $を示す．
We use $\cos x\geq 1-\frac{x^2}2$ for any $x\in[-\frac\pi 2,\frac\pi 2]$.
%実数$x\in[-\frac\pi 2,\frac\pi 2]$に対して$\cos x\geq 1-\frac{x^2}2$を用いる．
Fix a real number $r\in \mathbb R\setminus\{0\}$. For some positive integer $N\in\mathbb N$, we have $r\in[-\frac\pi 2\sqrt n,\frac\pi 2\sqrt n]$ for any positive integer $n\geq N$.
%任意に実数$r\in \mathbb R\setminus\{0\}$をとると，ある正の整数$N\in\mathbb N$が存在して，任意の正の整数$n\geq N$に対して$r\in[-\frac\pi 2\sqrt n,\frac\pi 2\sqrt n]$が成り立つ．
Then, we have
%よって任意の正の整数$n\geq N$に対して，
\[
\cos^{n-1}\frac r{\sqrt n}\geq \left(1-\frac 12\left(\frac r{\sqrt n}\right)^2\right)^{n-1}
\]
for any positive integer $n\geq N$.
%が成り立つ．
We obtain $\liminf_{n\to\infty}\cos^{n-1}\frac r{\sqrt n}\geq e^{-\frac{r^2}2}$ because we have
\[
\left(1-\frac 12\left(\frac r{\sqrt n}\right)^2\right)^{n-1}
=\left(1-\frac{r^2}{2n}\right)^{\left(-\frac{2n}{r^2}\right)\cdot\left(-\frac{r^2}2\right)-1}
\to e^{-\frac{r^2}2}\text{\quad as $n\to\infty$}.
\]
%であるから$\liminf_{n\to\infty}\cos^{n-1}\frac r{\sqrt n}\geq e^{-\frac{r^2}2} $が従う．

We next prove $\limsup_{n\to\infty}\cos^{n-1}\frac r{\sqrt n}\leq e^{-\frac{r^2}2}$.
%次に$\limsup_{n\to\infty}\cos^{n-1}\frac r{\sqrt n}\leq e^{-\frac{r^2}2}$を示す．
Fix a real number $r\in\mathbb R\setminus\{0\}$ and take any real number $\varepsilon\in(0,1)$.
%任意に実数$r\in\mathbb R\setminus\{0\}$を固定し，任意に実数$\varepsilon\in(0,1)$をとる．
Since $\lim_{x\to 0}\frac{1-\cos x}{x^2}=\frac 12$, for some $\delta>0$, we have $\cos x\leq 1-\left(\frac 12-\varepsilon\right)x^2$ for any $x\in(-\delta,\delta)$.
%$\lim_{x\to 0}\frac{1-\cos x}{x^2}=\frac 12$より，ある$\delta>0$が存在して，任意の$x\in(-\delta,\delta)$に対して$\cos x\leq 1-\left(\frac 12-\varepsilon\right)x^2$が成り立つ．
We take some positive integer $N\in\mathbb N$ satisfying $|\frac r{\sqrt N}|<\delta$.
%いま，ある正の整数$N\in\mathbb N$が存在し，$|\frac r{\sqrt n}|<\delta$が成り立つ．
For any positive integer $n\geq N$, we have 
%このとき，任意の正の整数$n\geq N$に対して，
\[
\cos^{n-1}\frac r{\sqrt n}\leq \left(1-\frac{1-\varepsilon}2\left(\frac r{\sqrt n}\right)^2\right)^{n-1}.
\]
%が成り立つ．いま，
Since we have
\begin{align*}
\left(1-\frac{1-\varepsilon}2\left(\frac r{\sqrt n}\right)^2\right)^{n-1}&=
\left(1-\frac{1-\varepsilon}2\cdot\frac{r^2}n\right)^{\left(-\frac{2n}{(1-\varepsilon)r^2}\right)\cdot\left(-\frac{(1-\varepsilon)r^2}2\right)-1}\\
&\to e^{-\frac{r^2}2\cdot(1-\varepsilon)}\text{\quad as $n\to\infty$}
\end{align*}
and
%であり，
$e^{-\frac{r^2}2\cdot(1-\varepsilon)}\to e^{-\frac{r^2}2}$\text{\quad as }$\varepsilon\to +0$, we obtain $\limsup_{n\to\infty}\cos^{n-1}\frac r{\sqrt n}\leq e^{-\frac{r^2}2}$.
%であるから，$\limsup_{n\to\infty}\cos^{n-1}\frac r{\sqrt n}\leq e^{-\frac{r^2}2}$が従う．
This completes the proof.
\end{proof}
\begin{lem}\label{cos_bdd}
For any integer $n\geq 2$ and any real number $r\in[-\frac\pi 2\sqrt n,$\\%改行
$\frac\pi 2\sqrt n]$, we have
%整数$n\geq 2$，実数$r\in[-\frac\pi 2\sqrt n,\frac\pi 2\sqrt n]$に対して，
\[
\cos^{n-1}\frac r{\sqrt n}\leq e^{-\frac{r^2}4}
\]
%が成り立つ．
\end{lem}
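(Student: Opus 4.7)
The plan is to reduce the claim to the single-variable inequality $\cos x \leq e^{-x^2/2}$ valid on $[-\pi/2,\pi/2]$, and then to absorb the mismatch between the factor $n-1$ and the target exponent $r^2/4$ using a clean comparison between $(n-1)/(2n)$ and $1/4$.

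First I would establish the auxiliary inequality
\[
\cos x \leq e^{-x^2/2} \quad \text{for all } x \in [-\pi/2,\pi/2].
\]
The inequality is trivial at the endpoints (the left side is $0$), so it suffices to handle $x \in (-\pi/2,\pi/2)$, where $\cos x > 0$ and we may take logarithms. Define $h(x) := \log \cos x + x^2/2$. Then $h(0)=0$, $h'(x) = -\tan x + x$, $h'(0) = 0$, and $h''(x) = 1 - \sec^2 x = -\tan^2 x \leq 0$. Hence $h'$ is non-increasing on $(-\pi/2,\pi/2)$ with $h'(0) = 0$, so $h$ attains its maximum at $x=0$, giving $h(x) \leq 0$ and therefore $\cos x \leq e^{-x^2/2}$.

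Next I would substitute $x = r/\sqrt n$. Since $r \in [-\tfrac{\pi}{2}\sqrt n, \tfrac{\pi}{2}\sqrt n]$ means $r/\sqrt n \in [-\pi/2,\pi/2]$, the auxiliary inequality yields
\[
\cos\frac{r}{\sqrt n} \leq e^{-r^2/(2n)}.
\]
Raising both sides to the power $n-1 \geq 1$ (here I use $n \geq 2$ and that the base is non-negative) gives
\[
\cos^{n-1}\frac{r}{\sqrt n} \leq e^{-(n-1)r^2/(2n)}.
\]
Finally, the elementary inequality $(n-1)/(2n) \geq 1/4$ for $n \geq 2$ (equality at $n=2$) together with $r^2 \geq 0$ yields $e^{-(n-1)r^2/(2n)} \leq e^{-r^2/4}$, which is the desired bound.

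I expect no real obstacle here: the only substantive step is the pointwise bound $\cos x \leq e^{-x^2/2}$, which is a standard convexity-type argument via the second derivative of $\log \cos x + x^2/2$. The remaining manipulations are bookkeeping; the sole place one must be careful is to check the endpoint case and the condition $n \geq 2$, which is exactly what makes $(n-1)/(2n) \geq 1/4$ hold and thus allows the weaker bound $e^{-r^2/4}$ (rather than the sharper $e^{-r^2/2}$ suggested by Lemma \ref{cos_lim}) to be valid uniformly in $n$.
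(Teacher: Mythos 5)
Your proof is correct. It takes a slightly different route from the paper's: the paper works directly with the $n$-dependent function $f(r)=-\tfrac{r^2}{4}-(n-1)\log\cos\tfrac{r}{\sqrt n}$, reduces to $r\ge 0$ by symmetry, and shows $f'\ge 0$ in one computation, using $n\ge 2$ to absorb the coefficient and $\tan x\ge x$ on $[0,\tfrac\pi2)$ to get the sign. You instead factor the problem into an $n$-free sharp bound $\cos x\le e^{-x^2/2}$ on $[-\tfrac\pi2,\tfrac\pi2]$ (proved by the concavity of $h(x)=\log\cos x+\tfrac{x^2}{2}$, which is the same underlying fact $\tan x\ge x$ in disguise), and then trade the exponent mismatch via $(n-1)/(2n)\ge \tfrac14$ for $n\ge 2$. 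The two arguments rest on the same calculus input, but your decomposition is more modular and makes transparent why the weaker constant $\tfrac14$ appears (it is exactly the worst case $n=2$ of $(n-1)/(2n)$), whereas the paper's one-shot monotonicity argument is shorter to state but hides that bookkeeping inside the derivative estimate. Your endpoint check and the monotonicity of $t\mapsto t^{n-1}$ on nonnegative bases are handled correctly, so there is no gap.
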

\begin{proof}
Take any integer $n\geq 2$.
%任意に整数$n\geq 2$をとる．
This lemma is clear if $r=\pm\frac\pi 2\sqrt n$.
%$r=\pm\frac\pi 2\sqrt n$のときは明らか．
Then, we prove the lemma in the case $r\in(-\frac\pi 2\sqrt n,\frac\pi 2\sqrt n)$.
%よって$r\in(-\frac\pi 2\sqrt n,\frac\pi 2\sqrt n)$に対して$\log\cos^{n-1}\frac r{\sqrt n}\leq -\frac{r^2}4$を示す．
By the symmetry, we may assume $r\geq 0$.
%It is sufficient that we prove in the case where $r\geq 0$ because of the symmetry.
%対称性から$r\geq 0$としてよい．
Setting
\[
f(r):=-\frac{r^2}4-(n-1)\log\cos\frac r{\sqrt n},
\]
%とおく．このとき，
we have
\begin{align*}
f'(r)&=-\frac r2+(n-1)\cdot\frac 1{\sqrt n}\tan\frac r{\sqrt n}\\
&=-\frac{\sqrt n}2\cdot\frac r{\sqrt n}+\left(\sqrt n-\frac 1{\sqrt n}\right)\tan\frac r{\sqrt n}\\
&\geq\frac{\sqrt n}2\left(\tan\frac r{\sqrt n}-\frac r{\sqrt n}\right)\geq 0,
\end{align*}
where we use $n\geq 2$ in the first inequality and $\frac r{\sqrt n}\in[0,\frac\pi 2)$ in the second inequality.
%である．最後の行においてはじめの不等号では$n\geq 2$を，次の不等号では$\frac r{\sqrt n}\in[0,\frac\pi 2)$を用いた．
Since $f(0)=0$, we obtain $f(r)\geq 0$ for any $r\in[0,\frac\pi 2\sqrt n)$.
%したがって$f(0)=0$より，任意の$r\in[0,\frac\pi 2\sqrt n)$に対して$f(r)\geq 0$が成り立つ．
This completes the proof.
\end{proof}
\begin{lem}\label{one_point_weak}
Fix a point $\bar x\in\Srn$, and put $\xi_n:\Srn\to\mathbb R$ as $\xi_n(x):=d_\Srn(x,\bar x)$.
%点$\bar x\in\Srn$を固定し，$\xi_n:\Srn\to\mathbb R$を$\xi_n(x):=d_\Srn(x,\bar x)$とおく．
Then, we have
%このとき，任意の$r\in \mathbb R$に対して，
\[
\frac{d((\xi_n-\sqrt n\frac\pi 2)_*\mu_\Srn)}{d\mathcal L^1}(r)\to
 \frac{d\gamma^1}{d\mathcal L^1}(r),\quad n\to \infty
\]
for any $r\in \mathbb R$, where we define $\gamma^1$ as
%が成り立つ．ただし，
\[
\frac{d\gamma^1}{d\mathcal L^1}(r):=\frac 1{\sqrt{2\pi}}e^{-\frac{r^2}2}.
\]
In particular, we have
%である．特に
\[
(\xi_n-\sqrt n\frac\pi 2)_*\mu_\Srn\to \gamma^1\text{\quad as $n\to\infty$ weakly}.
\]
%が成り立つ．
\end{lem}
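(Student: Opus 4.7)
The plan is to compute the density of $(\xi_n)_*\mu_\Srn$ explicitly via geodesic polar coordinates around $\bar x$, shift by $\sqrt n\,\pi/2$, and then apply the two preparatory Lemmas \ref{cos_lim} and \ref{cos_bdd} together with dominated convergence. Weak convergence will then follow from pointwise convergence of densities via Scheff\'e's theorem.

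First I would recall that on $\Srn$, expressed in geodesic polar coordinates $(\rho,\theta)\in[0,\sqrt n\,\pi]\times S^{n-1}$ centered at $\bar x$, the Riemannian volume form is $(\sqrt n)^{n-1}\sin^{n-1}(\rho/\sqrt n)\,d\rho\,d\Omega_{n-1}$. Integrating out the $S^{n-1}$ variable and normalizing, the push-forward $(\xi_n)_*\mu_\Srn$ is absolutely continuous with density
\[
\frac{d(\xi_n)_*\mu_\Srn}{d\mathcal L^1}(\rho)=c_n\sin^{n-1}\!\frac{\rho}{\sqrt n}\cdot\mathbf 1_{[0,\sqrt n\,\pi]}(\rho),
\qquad
c_n:=\left(\int_0^{\sqrt n\,\pi}\sin^{n-1}\!\frac{s}{\sqrt n}\,ds\right)^{-1}.
\]
Substituting $\rho=\sqrt n\,\pi/2+r$ and using $\sin(\pi/2+r/\sqrt n)=\cos(r/\sqrt n)$, the density of the shifted measure becomes
\[
\frac{d((\xi_n-\sqrt n\,\pi/2)_*\mu_\Srn)}{d\mathcal L^1}(r)=c_n\cos^{n-1}\!\frac{r}{\sqrt n}\cdot\mathbf 1_{[-\sqrt n\,\pi/2,\,\sqrt n\,\pi/2]}(r).
\]

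Next I would determine the limit of $c_n$. A change of variable in the definition of $c_n$ gives
\[
c_n^{-1}=\int_{-\sqrt n\,\pi/2}^{\sqrt n\,\pi/2}\cos^{n-1}\!\frac{r}{\sqrt n}\,dr.
\]
By Lemma \ref{cos_lim}, the integrand converges pointwise to $e^{-r^2/2}$, and by Lemma \ref{cos_bdd} it is uniformly dominated by the integrable function $e^{-r^2/4}$ for $n\geq 2$. Dominated convergence therefore yields
\[
c_n^{-1}\to\int_{-\infty}^{\infty}e^{-r^2/2}\,dr=\sqrt{2\pi},
\]
hence $c_n\to 1/\sqrt{2\pi}$. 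Combining this with Lemma \ref{cos_lim} once more gives, for every fixed $r\in\R$,
\[
c_n\cos^{n-1}\!\frac{r}{\sqrt n}\cdot\mathbf 1_{[-\sqrt n\,\pi/2,\,\sqrt n\,\pi/2]}(r)\longrightarrow\frac{1}{\sqrt{2\pi}}e^{-r^2/2}=\frac{d\gamma^1}{d\mathcal L^1}(r),
\]
which is the first assertion.

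Finally, for the weak convergence I would invoke Scheff\'e's theorem: pointwise almost-everywhere convergence of probability densities to a probability density implies $L^1(\mathcal L^1)$-convergence of the densities, which in turn implies convergence in total variation and hence weak convergence. The main effort is not conceptual but bookkeeping: correctly identifying the density on the sphere of radius $\sqrt n$ and verifying that the uniform bound in Lemma \ref{cos_bdd} legitimately applies for all $n\geq 2$, so that dominated convergence is available both for the normalization $c_n^{-1}$ and for Scheff\'e (where in fact pointwise convergence of densities alone suffices). Once those two ingredients are in place, the assertion is immediate.
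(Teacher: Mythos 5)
Your proposal is correct and follows essentially the same route as the paper: write the push-forward density in polar coordinates as a normalized $\cos^{n-1}(r/\sqrt n)$ on $[-\frac{\pi}{2}\sqrt n,\frac{\pi}{2}\sqrt n]$, apply Lemma \ref{cos_lim} for pointwise convergence and Lemma \ref{cos_bdd} for the dominating bound in the normalizing integral, and conclude by dominated convergence. Your additional invocation of Scheff\'e's theorem simply makes explicit the passage from pointwise convergence of densities to weak convergence, which the paper leaves implicit.
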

\begin{proof}
Let $\chi_{[-\frac\pi 2\sqrt n,\frac\pi 2\sqrt n]}$ be the indicator function of the subset $[-\frac\pi 2\sqrt n,$\\%改行
$\frac\pi 2\sqrt n]$.
%$\chi_{[-\frac\pi 2\sqrt n,\frac\pi 2\sqrt n]}$を定義関数とする．すなわち，実数$x\in\R$に対して
%\[
%\chi_{[-\frac\pi 2\sqrt n,\frac\pi 2\sqrt n]}(x):=
%\begin{cases}
%1 &(x\in [-\frac\pi 2\sqrt n,\frac\pi 2\sqrt n])\\
%0 &(\text{otherwise})
%\end{cases}
%\]
%と定義する．
We have
%このとき，任意の実数$r\in\mathbb R$に対して
\[
\frac{d((\xi_n-\sqrt n\frac\pi 2)_*\mu_\Srn)}{d\mathcal L^1}(r)=\chi_{[-\frac\pi 2\sqrt n,\frac\pi 2\sqrt n]}\cdot\frac{\cos\frac r{\sqrt n}}{\int_{-\frac\pi 2\sqrt n}^{\frac\pi 2\sqrt n}\cos^{n-1}\frac t{\sqrt n}dt}
\]
for any real number $r\in\mathbb R$.
%が成り立つ．
We obtain 
%いま，補題\ref{cos_bdd}よりルベーグの優収束定理を用いて，補題\ref{cos_lim}を適用することで，
\[
\chi_{[-\frac\pi 2\sqrt n,\frac\pi 2\sqrt n]}\cdot\frac{\cos\frac r{\sqrt n}}{\int_{-\frac\pi 2\sqrt n}^{\frac\pi 2\sqrt n}\cos^{n-1}\frac t{\sqrt n}dt}
\to \frac{e^{-\frac{r^2}2}}{\int_\mathbb R e^{-\frac{t^2}2}dt}\text{\quad as $n\to\infty$}
\]
because of Lebesgue's dominated convergence theorem, Lemma \ref{cos_bdd} and Lemma \ref{cos_lim}.
%を得る．
This completes the proof.
\end{proof}
\begin{proof}[Proof of Corollary \ref{normal_law}]
Take any 1-Lipschitz functions $f_n:\Srn\to \mathbb R,\quad n=1,2,\dots$. We may assume $\Box(\mmr{(f_{n_i})_*\mu_{S^n(\sqrt n_i)}},\mmr\sigma)\to 0\text{\quad as $n\to\infty$}$ because of Proposition \ref{prop:box_prok}.
%任意に1-リプシッツ関数列$f_n:\Srn\to \mathbb R$をとり，$\Box(\mmr{(f_{n_i})_*\mu_{S^n(\sqrt n_i)}},\mmr\sigma)\to 0\ (n\to\infty)$と仮定する．
Fix a point $\bar x\in\Srn$ and define $\xi_n(x):=d_\Srn(x,\bar x)$. By applying Theorem \ref{sn_max}, we have $(\mathbb R,|\cdot|,(f_n)_*\mu_\Srn)\prec(\mathbb R,|\cdot|,(\xi_n)_*\mu_\Srn)$ for any positive integer $n\in\mathbb N$.
%いま，$\bar x\in\Srn$を固定して$\xi_n(x):=d_\Srn(x,\bar x)$とおくと定理より任意の正の整数$n\in\mathbb N$に対して$(\mathbb R,|\cdot|,(f_n)_*\mu_\Srn)\prec(\mathbb R,|\cdot|,(\xi_n)_*\mu_\Srn)$が成り立つ．
Since we have $\Box(\mmr{(\xi_n)_*\mu_\Srn},\mmr{\gamma^1})\to 0$ by Lemma \ref{one_point_weak}, we obtain $(\mathbb R,|\cdot|,\sigma)\prec(\mathbb R,|\cdot|,\gamma^1)$ by Theorem \ref{thm:lip_box}.
%補題\ref{one_point_weak}より$\Box(\mmr{(\xi_n)_*\mu_\Srn},\mmr{\gamma^1})\to 0$が成り立つので，定理\ref{thm:lip_box}より$(\mathbb R,|\cdot|,\sigma)\prec(\mathbb R,|\cdot|,\gamma^1)$が従う．
This completes the proof.
\end{proof}
%\begin{rem}
%Originally, normal law {\`a} la L{\'e}vy is claimed by the vaguely convergence instead of $\Box$-convergence. In the case that we consider probability measures, we see the vaguely convergence case because $\Box$-convergence is weaker than vaguely convergence.
%本来，normal law {\`a} la L{\'e}vyはBox収束ではなく，vaguelyで主張される定理だが確率測度においては弱収束よりもBox収束の方が位相が弱いため Corollary \ref{normal_law}から弱収束における主張も従うことが分かる．
%\end{rem}

\section{A necessary condition for the existence of the maximum of the 1-measurement}
%\section{1-メジャーメントが最大元を持つための必要条件}
\subsection{Maximal elements of the 1-measurement}
%\subsection{1-メジャーメントの極大元}
\begin{lem}\label{diam_iso}
Let $\mu,\nu$ be two Borel probability measures on $\R$.
We assume $\diam\supp\mu=\diam\supp\nu<\infty$ and $\mu\prec\nu$.
%実数$\R$上の二つのボレル確率測度$\mu,\nu$において
%$\diam\supp\mu=\diam\supp\nu<\infty$かつ$\mu\prec\nu$
Then, two \mmsp s $\mmr\mu$ and $\mmr\nu$ are mm-isomorphic to each other.
%ならば二つの\mmsp$\mmr\mu,\,\mmr\nu$はmm-同型になる．
\end{lem}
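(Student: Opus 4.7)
The plan is to use the Lipschitz domination $\mu\prec\nu$ to produce a 1-Lipschitz map $f:\supp\nu\to\supp\mu$ with $f_*\nu=\mu$, and then argue that equality of the finite diameters forces $f$ to be an isometric affine bijection, hence directly an mm-isomorphism.

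First I would identify the endpoints of the two supports. Since $\diam\supp\nu<\infty$ and $\supp\nu$ is closed in $\R$, it is compact; set $a:=\min\supp\nu$, $b:=\max\supp\nu$, so that $b-a=D:=\diam\supp\nu$, and similarly set $c:=\min\supp\mu$, $d:=\max\supp\mu$ with $d-c=D$ by hypothesis. Continuity of $f$ and compactness of $\supp\nu$ make $f(\supp\nu)$ compact, and a routine support computation (using that for every $y=f(x)$ with $x\in\supp\nu$, preimages of balls around $y$ are open neighborhoods of $x$) yields $f(\supp\nu)=\supp f_*\nu=\supp\mu$. In particular there exist $x_0,x_1\in\supp\nu$ with $f(x_0)=c$ and $f(x_1)=d$.

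Next I would pin down the behavior of $f$ on these extreme points. The chain
\[
D=|f(x_0)-f(x_1)|\le|x_0-x_1|\le b-a=D
\]
is tight, so $\{x_0,x_1\}=\{a,b\}$. In the orientation $f(a)=c$, $f(b)=d$, any $x\in\supp\nu\subseteq[a,b]$ satisfies the two 1-Lipschitz estimates $f(x)-c\le x-a$ and $d-f(x)\le b-x$; adding them gives $D\le D$, which forces both inequalities to be equalities and therefore $f(x)=x-a+c$. The opposite orientation $f(a)=d$, $f(b)=c$ is handled symmetrically and yields $f(x)=a+d-x$. Either way, $f$ is an isometric bijection of $\supp\nu$ onto $\supp\mu$ with $f_*\nu=\mu$, which is exactly an mm-isomorphism between $\mmr\nu$ and $\mmr\mu$.

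I do not expect a significant obstacle. The only mildly delicate step is the identification $\supp\mu=f(\supp\nu)$ (rather than just $\overline{f(\supp\nu)}$), which requires compactness of $\supp\nu$ and continuity of $f$ so that preimages of extreme points of $\supp\mu$ actually exist in $\supp\nu$. The remainder is an elementary triangle-inequality argument on the real line exploiting the observation that two 1-Lipschitz constraints can simultaneously saturate only along an affine map with slope $\pm 1$.
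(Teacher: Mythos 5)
Your proposal is correct and follows essentially the same route as the paper: extract the 1-Lipschitz map $f:\supp\nu\to\supp\mu$ with $f_*\nu=\mu$, locate preimages of the extreme points of $\supp\mu$, use the tight chain of inequalities to force those preimages to be the endpoints of $\supp\nu$, and then saturate the two 1-Lipschitz estimates to conclude $f$ is the affine isometry of slope $\pm1$. The only difference is cosmetic — you justify surjectivity of $f$ onto $\supp\mu$ via compactness, a point the paper simply asserts — so there is nothing further to add.
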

\begin{proof}
Since $\mu\prec\nu$, there exist a 1-Lipschitz function $\varphi:\supp\nu\to\supp\mu$ such that $\varphi_*\nu=\mu$.
%$\mu\prec\nu$よりある1-リプシッツ関数$\varphi:\supp\nu\to\supp\mu$が存在して$\varphi_*\nu=\mu$が成り立つ．
Put $c:=\diam\supp\mu=\diam\supp\nu$, $y_0:=\min\supp\mu$ and $y_1:=\max\supp\mu$. We have $c=y_1-y_0$.
%$c:=\diam\supp\mu=\diam\supp\nu$とおく．
%$y_0:=\min\supp\mu,\ y_1:=\max\supp\mu$とおくと$c=y_1-y_0$が成り立っている．
Since $\varphi$ is surjective, there exist $x_i\in\supp\nu$ such that $\varphi(x_i)=y_i$ for each $i=0,1$.
%いま，$\varphi$は全射であるので$i=0,1$において，ある$x_i\in\supp\nu$が存在して$\varphi(x_i)=y_i$が成り立つ．
We have
%いま，$\varphi$が1-リプシッツ関数であるので
\[
c=y_1-y_0=\varphi(x_1)-\varphi(x_0)\leq|x_1-x_0|\leq c
\]
because $\varphi$ is a 1-Lipschitz function.
%が成り立つ．
Therefore we obtain $|x_1-x_0|= c$.
%よって$|x_1-x_0|= c$を得る．
In particular, the point $x_0$ is the maximum or the minimum of $\supp \nu$.
%特に$x_0$は$\supp \nu$の最大元か最小元である．
We prove $\varphi(x)=y_0+|x-x_0|$ for any $x\in\supp\nu$ in the following.
%いま，任意の$x\in\supp\nu$に対して$\varphi(x)=y_0+|x-x_0|$であることを示す．
If we prove it, we see that $\varphi$ is isometry and the proof is completed.
%これを示せば$\varphi$が等長写像であることが分かり証明が終わる．
Let us prove it in the case where $x_0\leq x_1$.
%まず$x_0\leq x_1$のときを考える．
We have $x_0=\min\supp\nu,\ x_1=\max\supp\nu$ because $x_1=x_0+c$.
%このとき$x_1=x_0+c$であり，$x_0=\min\supp\nu,\ x_1=\max\supp\nu$となる．
We obtain
%任意の$x\in\supp\nu$に対して，
\[
\varphi(x)-y_0=\varphi(x)-\varphi(x_0)\leq x-x_0
\]
for any $x\in\supp\nu$.
%が成り立つ．したがって
This implies that
$\varphi(x)\leq y_0+|x-x_0|$.
%が成り立つ．また，
We have
\begin{align*}
(y_0+c)-\varphi(x)&=y_1-\varphi(x)\\
&=\varphi(x_1)-\varphi(x)\\
&\leq x_1-x\\
&=x_0+c-x.
\end{align*}
%が成り立つ．したがって
We also have
$\varphi(x)\geq y_0+|x-x_0|$.
%が成り立つ．
We prove it in the case where $x_0\geq x_1$ similarly.
%$x_0\geq x_1$のときも同様にして示される．
In fact, we have
$\varphi(x)\leq |x-x_0|+y_0$
because
%実際，
\[
\varphi(x)-y_0=\varphi(x)-\varphi(x_0)\leq |x-x_0|
\]
%であるから，$\varphi(x)\leq |x-x_0|+y_0$が成り立ち，
and we have $\varphi(x)\geq |x-x_0|+y_0$ because
\begin{align*}
x_0-x_1+y_0-\varphi(x)&=c+y_0-\varphi(x)\\
&=y_1-\varphi(x)\\
&= \varphi(x_1)-\varphi(x)\\
&\leq x-x_1.
\end{align*}
%より$\varphi(x)\geq |x-x_0|+y_0$が成り立つ．
This completes the proof.
\end{proof}
\begin{proof}[Proof of Proposition \ref{diam_maximal}]
%\begin{proof}[命題\ref{diam_maximal}の証明]
Take a measure $\mu\in\mathcal M(X;1)$ with $\diam\supp$\\%改行
$\mu=\diam X$ and a measure $\nu\in\mathcal M(X;1)$ with $\mu\prec\nu$.
%測度$\mu\in\mathcal M(X;1)$で$\diam\supp\mu=\diam X$を満たすものをとり，
%測度$\nu\in\mathcal M(X;1)$で$\mu\prec\nu$を満たすものをとる．
We have $\diam\supp\nu\leq\diam X$ because $\nu\in\mathcal M(X;1)$.
%$\nu\in\mathcal M(X;1)$より$\diam\supp\nu\leq\diam X$が成り立つ．
We also have $\diam\supp\mu$\\%改行
$\leq\diam\supp\nu$ because $\mu\prec\nu$.
%また，$\mu\prec\nu$より$\diam\supp\mu\leq\diam\supp\nu$が成り立つ．
Two \mmsp s $\mmr\mu$ and $\mmr\nu$ is mm-isomorphic to each other because of $\diam\supp\mu=\diam\supp\nu$ and Lemma \ref{diam_iso}.
%したがって，$\diam\supp\mu=\diam\supp\nu$が成り立つので，補題\ref{diam_iso}より$\mmr\mu$と$\mmr\nu$は同型となる．
This completes the proof.
\end{proof}
%\begin{proof}[Proof of Corollary \ref{xi_max}]
%\begin{proof}[系\ref{xi_max}の証明]
%\mmsp $X$が$\diam X<\infty$を満たすことから，ある$x_0\in X$が存在して$\diam X=\sup_{x\in X}\dx(x,x_0)$を満たす．このとき$\xi(x):=\dx(x,x_0)$とおくと$\supp\diam\xi_*\mux=\diam X$が成り立つ．したがって，命題\ref{diam_maximal}より$\xi_*\mux$は$\mathcal M(X;1)$の極大元である．$\mathcal M(X;1)$に最大元が存在する場合，最大元と極大元は一致するので，最大元は$\xi_*\mux$となる．
%\end{proof}
\subsection{A necessary condition for the existence of the maximum of the 1-measurement}\label{sec:max_threePt}
%\subsection{1-メジャーメントが最大元を持つための必要条件}\label{sec:max_threePt}
\begin{proof}[Proof of Theorem \ref{max_threePt}]
%\begin{proof}[定理\ref{max_threePt}の証明]
We prove the contrapositive proposition.
%対偶を示す．
Take three points $x_0,x_1,x_2\in X$ satisfying $\dx(x_0,x_1)=\diam X$ and $\dx(x_0,$\\%改行
$x_2)+\dx(x_2,x_1)>\dx(x_0,x_1)$.
%三点$x_0,x_1,x_2\in X$で$\dx(x_0,x_1)=\diam X$かつ$\dx(x_0,x_2)+\dx(x_2,x_1)>\dx(x_0,x_1)$を満たすものをとる．
Put $r_i:=\dx(x_i,x_2),\ i=0,1$, $R:=\diam X$ and $D:=\frac{r_0+r_1-R}2>0$.
%$r_i:=\dx(x_i,x_2)\,(i=0,1)$，$R:=\diam X$，$D:=\frac{r_0+r_1-R}2>0$とおく．
We have $r_i-D>0,\ i=0,1$ and $(r_0-D)+(r_1-D)=R$.
%いま，$r_i-D>0\,(i=0,1)$であり，$(r_0-D)+(r_1-D)=R$が成り立っていることに注意しておく．
By the symmetry, we may assume $r_1\leq r_0$.
%対称性から$r_1\leq r_0$を仮定してよい．
Put a function $\xi:X\to\R$ as $\xi(x):=\dx(x,x_0)$ and define a 1-Lipschitz function $\zeta:X\to\R$ by
%関数$\xi:X\to\R$を$\xi(x):=\dx(x,x_0)$とおき，1-Lipschitz関数$\zeta:X\to\R$を以下のように定義する．
\[
\zeta(x):=
\begin{cases}
\dx(x,x_0) & \text{if $x\in U_{r_0-D}(x_0)$},\\
R-\dx(x,x_1) & \text{if $x\in U_{r_1-D}(x_1)$},\\
r_0-D & \text{otherwise}.
\end{cases}
\]
Let us prove that $\zeta$ is a 1-Lipschitz function.
%$\zeta$が1-Lipschitz関数であることを示す．
In the case where $x\in U_{R-r_1}(x_0)$ and $y\in U_{r_1}(x_1)$, we have
%$x\in U_{R-r_1}(x_0), y\in U_{r_1}(x_1)$のときは，
\begin{align*}
|\zeta(x)-\zeta(y)|&=|\dx(x,x_0)-R+\dx(y,x_1)|\\
&=R-\dx(x,x_0)-\dx(y,x_1)\\
&=\dx(x_0,x_1)-\dx(x,x_0)-\dx(y,x_1)\\
&\leq \dx(x,y).
\end{align*}
In the case where $x\in U_{R-r_1}(x_0)$ and $y\in U_{R-r_1}(x_0)^c\cap U_{r_1}(x_1)^c$, we have
%が成り立つ．$x\in U_{R-r_1}(x_0), y\in U_{R-r_1}(x_0)^c\cap U_{r_1}(x_1)^c$のときは
\begin{align*}
|\zeta(x)-\zeta(y)|&=|\dx(x,x_0)-(R-r_1)|\\
&=-\dx(x,x_0)+R-r_1\\
&\leq -\dx(x,x_0)+\dx(x_0,y)\\
&\leq \dx(x,y).
\end{align*}
In the case where $x\in U_{r_1}(x_1)$ and $y\in U_{r_1}(x_1)^c\cap U_{r_1}(x_1)^c$, we have
%が成り立つ．$x\in U_{r_1}(x_1), y\in U_{r_1}(x_1)^c\cap U_{r_1}(x_1)^c$のときは
\begin{align*}
|\zeta(x)-\zeta(y)|&=|(R-r_1)-(R-\dx(x,x_1))|\\
&=|\dx(x,x_1)-r_1|\\
&=r_1-\dx(x,x_1)\\
&\leq\dx(x_1,y)-\dx(x,x_1)|\\
&\leq\dx(x,y).
\end{align*}
%が成り立つ．
Thus, the function $\zeta$ is a 1-Lipschitz function.
%以上で$\zeta$が1-Lipschitz関数であることが示された．

Two measures $\xi_*\mux$ and $\zeta_*\mux$ are both maximal elements by Proposition \ref{diam_maximal} and $\diam\supp\xi_*\mux=\diam\supp\zeta_*\mux=\diam X$.
%いま，$\diam\supp\xi_*\mux=\diam\supp\zeta_*\mux=\diam X$であるから，命題\ref{diam_maximal}より$\xi_*\mux,\zeta_*\mux$はどちらも$\mathcal M(X;1)$の極大元である．
Let us prove that two measures $\xi_*\mux$ and $\zeta_*\mux$ are not mm-isomorphic to each other.
%よって，$\xi_*\mux$と$\zeta_*\mux$が同型でないことを示せばよい．
It is sufficient to prove that $\xi_*\mux\neq\zeta_*\mux$ and $(R-\xi)_*\mux\neq\zeta_*\mux$.
%これには$\xi_*\mux\neq\zeta_*\mux$と$(R-\xi)_*\mux\neq\zeta_*\mux$を示せば十分である．
We prove those by contradiction. We first assume $\xi_*\mux=\zeta_*\mux$.
%それぞれ背理法で示す．まず，$\xi_*\mux=\zeta_*\mux$を仮定する．
%このとき，
We have
\begin{align*}
\mux(B_{r_0-D}(x_0)\sqcup U_D(x_2))&\leq\mux(U_{r_1-D}(x_1)^c)\\
&=\zeta_*\mux([0,r_0-D])\\
&=\xi_*\mux([0,r_0-D])\\
&=\mux(B_{r_0-D}(x_0)).
\end{align*}
This inequality contradicts $\mux(U_D(x_2))>0$.
%が成り立つ．これは$\mux(U_D(x_2))>0$に矛盾する．
We next assume $(R-\xi)_*\mux=\zeta_*\mux$ and we have
%次に$(R-\xi)_*\mux=\zeta_*\mux$を仮定する．このとき，
\begin{align*}
\mux(U_{r_1-D}(x_1))\sqcup U_D(x_2))&\leq\mux(B_{r_0-D}(x_0)^c)\\
&=\xi_*\mu((r_0-D,R])\\
&=(R-\xi)_*\mux([0,r_1-D))\\
&=\zeta_*\mux([0,r_1-D))\\
&=\xi_*\mux([0,r_1-D))\\
&=(R-\xi)_*\mux((r_0-D,R])\\
&=\zeta_*\mux((r_0-D,R])\\
&=\mux(U_{r_1-D}(x_1)),
\end{align*}
where we use $r_1-D\leq r_0-D$ in the equality on the fifth line.
This inequality contradicts $\mux(U_D(x_2))>0$. 
%したがって，$\mux(U_D(x_2))>0$に矛盾する．ここで5行目の等号では$r_1-D\leq r_0-D$を用いた．
This completes the proof.
\end{proof}
\section*{Acknowledgments}
The author would like to thank Prof.\,Takashi Shioya for many helpful suggestions. He also thanks Daisuke Kazukawa and Yuya Higashi for many stimulating conversations. He is grateful to Prof.\,Kei Funano for providing important comments.
\begin{bibdiv}
\begin{biblist}

\bib{Milman:iso}{article}{
   author={Figiel, T.},
   author={Lindenstrauss, J.},
   author={Milman, V. D.},
   title={The dimension of almost spherical sections of convex bodies},
   journal={Acta Math.},
   volume={139},
   date={1977},
   number={1-2},
   pages={53--94},
   issn={0001-5962},
%   review={\MR{0445274}},
}
\bib{Gmv:isop}{article}{
   author={Gromov, Mikhail},
   title={Erratum to: Isoperimetry of waists and concentration of maps GAFA
   13:1 (2003), pp. 178--215 [MR1978494]},
   journal={Geom. Funct. Anal.},
   volume={18},
   date={2009},
   number={5},
   pages={1786},
   issn={1016-443X},
%   review={\MR{2481743}},
   doi={10.1007/s00039-009-0703-1},
}
\bib{Gmv:green}{book}{
   author={Gromov, Misha},
   title={Metric structures for Riemannian and non-Riemannian spaces},
   series={Modern Birkh\"auser Classics},
   edition={Reprint of the 2001 English edition},
   note={Based on the 1981 French original;
   With appendices by M. Katz, P. Pansu and S. Semmes;
   Translated from the French by Sean Michael Bates},
   publisher={Birkh\"auser Boston, Inc., Boston, MA},
   date={2007},
   pages={xx+585},
   isbn={978-0-8176-4582-3},
   isbn={0-8176-4582-9},
%   review={\MR{2307192 (2007k:53049)}},
}

\bib{Levy:iso}{book}{
   author={L{\'e}vy, Paul},
   title={Probl\`emes concrets d'analyse fonctionnelle. Avec un compl\'ement
   sur les fonctionnelles analytiques par F. Pellegrino},
   language={French},
   note={2d ed},
   publisher={Gauthier-Villars, Paris},
   date={1951},
   pages={xiv+484},
%   review={\MR{0041346}},
}
\bib{Shioya:mmg}{book}{
   author={Shioya, Takashi},
   title={Metric measure geometry},
   series={IRMA Lectures in Mathematics and Theoretical Physics},
   volume={25},
   note={Gromov's theory of convergence and concentration of metrics and
   measures},
   publisher={EMS Publishing House, Z\"urich},
   date={2016},
   pages={xi+182},
   isbn={978-3-03719-158-3},
%   review={\MR{3445278}},
   doi={10.4171/158},
}
\bib{Yang:odd}{article}{
   author={Yang, C. T.},
   title={Odd-dimensional wiedersehen manifolds are spheres},
   journal={J. Differential Geom.},
   volume={15},
   date={1980},
   number={1},
   pages={91--96 (1981)},
   issn={0022-040X},
%   review={\MR{602442}},
}
\end{biblist}
\end{bibdiv}
\end{document}